\documentclass[12pt]{article}

\usepackage[margin=1in]{geometry}
\usepackage{amsmath,amssymb,amsfonts,amsthm}
\usepackage{mathtools}
\usepackage{graphicx}
\usepackage{enumitem}
\usepackage{placeins}
\usepackage{gensymb}
\usepackage{csquotes}
\usepackage{subcaption}
\usepackage{booktabs}
\usepackage{tabularx}
\usepackage[thinc]{esdiff}
\usepackage[hidelinks]{hyperref}

\setlist[enumerate]{leftmargin=.5in}
\setlist[itemize]{leftmargin=.5in}

\numberwithin{equation}{section} 
\newtheorem{theorem}{Theorem}[section] 

\newtheorem{corollary}[theorem]{Corollary}
\newtheorem{proposition}[theorem]{Proposition}
\theoremstyle{definition}
\newtheorem{definition}[theorem]{Definition}

\newenvironment{relevance}
  {\begingroup\begin{abstract}}
  {\end{abstract}\endgroup}
\newenvironment{mathcontent}
  {\begingroup\begin{abstract}}
  {\end{abstract}\endgroup}
\newenvironment{keywords}
  {\begin{quote}\small\noindent\textbf{Keywords.}\ }
  {\end{quote}}
\newenvironment{MSCcodes}
  {\begin{quote}\small\noindent\textbf{MSC codes.}\ }
  {\end{quote}}

\DeclareMathOperator{\sgn}{sgn}
\providecommand{\cI}{\mathcal{I}}
\providecommand{\R}{\mathbf{R}}
\providecommand{\cG}{\mathcal{G}}
\providecommand{\cC}{\mathcal{C}}
\providecommand{\cS}{\mathcal{S}}
\providecommand{\cH}{\mathcal{H}}
\providecommand{\cHB}{\mathcal{H}}
\providecommand{\cPH}{\mathcal{S}}
\providecommand{\hb}{\text{H}}
\providecommand{\ph}{\text{S}}
\providecommand{\dcup}{\mathbin{\dot{\cup}}}
\providecommand{\lp}{\biggl{(}}
\providecommand{\rp}{\biggr{)}}
\providecommand{\RN}[1]{\textup{\uppercase\expandafter{\romannumeral #1\relax}}}
\newcommand{\ignore}[1]{}

\title{Period Homeostasis Near Hopf Bifurcation}
\author{Steve Manns \quad Janet Best \quad Martin Golubitsky\\[0.5em]
\normalsize Department of Mathematics, The Ohio State University.}
\date{}

\hypersetup{
  pdftitle={Period Homeostasis Near Hopf Bifurcation},
  pdfauthor={Steve Manns, Janet Best, and Martin Golubitsky}
}

\begin{document}
\maketitle

\begin{abstract}
Homeostasis is a biological phenomenon in which a function of the state of the system remains approximately constant as an input parameter varies. Recent mathematical work has developed a framework for studying homeostasis using methods from singularity theory. In this framework, the requirement that the function of the state remain approximately constant is replaced by the condition that the derivative of the function with respect to the input parameter vanishes at an isolated point. This zero-derivative condition is called infinitesimal homeostasis. Much of the existing theory concerns steady states. In this paper, we develop an analogous theory for oscillatory systems in which the homeostatic quantity is the period of a stable periodic solution. We refer to this behavior as period homeostasis and to the corresponding zero-derivative condition as infinitesimal period homeostasis. Experimental studies in cyanobacteria and cultured human cells suggest that circadian rhythms can disappear through Hopf bifurcation as temperature decreases to a critical value, with the oscillation amplitude tending to zero. Motivated by this evidence, we focus on stable periodic solutions arising from nondegenerate Hopf bifurcation. We show that bifurcation theory and singularity theory can be used to find period homeostasis in parameterized ODE models. Our results connect the onset of oscillations with homeostasis of the period and yield a geometric description of parameter space that locally organizes the qualitative behavior of solutions of the ODE.
\end{abstract}

\begin{relevance}
Certain biological oscillators must maintain stable timing despite changes in environmental or intracellular conditions. Given their role in coordinating important physiological processes, circadian clocks are a classic example. Their periods remain approximately 24 hours despite variations in ambient temperature, nutrient availability, and, in mammals, transcription rates and gene expression levels. This work develops a general mathematical framework for understanding how such period homeostasis can arise in ODE models of biological oscillators. The framework has several advantages: the molecular mechanism responsible for compensation need not be specified in advance; compensation is directly connected to the existence of oscillations; and the framework is parameter-agnostic in the sense that a variety of compensation phenomena, including those mentioned above, can be viewed as special cases of the theory. Simulations of a simple mathematical model of the mammalian circadian clock support the applicability of the framework by identifying parameter regimes in which period homeostasis occurs. 
\end{relevance}

\begin{mathcontent}
This work studies parameterized ODEs with a bifurcation parameter $\lambda$ and an input parameter $\cI$, focusing on stable periodic solutions born at a nondegenerate Hopf bifurcation. Liapunov--Schmidt reduction simplifies the problem of finding small-amplitude periodic orbits to that of finding zeros of an $S^1$-equivariant map on $\R^2$. The reduced equations uniquely determine the period of the oscillation as a smooth function of $\lambda$ and $\cI$, allowing derivatives of the period to be calculated from the reduced system. By varying $\lambda$ and $\cI$, the defining conditions for Hopf bifurcation and infinitesimal period homeostasis, respectively, can be satisfied. Under appropriate nondegeneracy conditions, the implicit function theorem yields smooth Hopf bifurcation and infinitesimal period homeostasis curves that intersect transversely. Their union forms a Hopf--Homeostasis graph that organizes the local dynamics by separating steady-state and periodic regimes and identifying parameter values at which the variation in the period with respect to $\cI$ is at its smallest. Singularity theory gives the normal form of the period input-output function near the infinitesimal period homeostasis curve. 
\end{mathcontent}

\begin{keywords}
period homeostasis, Hopf bifurcation, biological rhythms, circadian rhythms, singularity theory
\end{keywords}

\begin{MSCcodes}
34C23, 34C25, 37N25, 92B25
\end{MSCcodes}

\section{Introduction} \label{intro_sec}

This Introduction begins with a review of steady-state infinitesimal homeostasis and its generalization to period homeostasis, which is the principal subject of this paper. The Introduction continues with a description of Hopf bifurcation, its coexistence with period homeostasis,  and a way to find this coexistence.  The Introduction ends with a list of the remaining sections of the paper.

\subsection{Infinitesimal Steady-State Homeostasis}\label{intro_steady_state_homeo}

Claude Bernard's observations of regulation within the \emph{milieu int\'erieur} \cite{Bernard1865} provided an early foundation for the study of homeostasis. Walter Cannon \cite{Cannon1929, Cannon1939} later coined and developed the term \enquote{homeostasis} to describe the capacity of biological systems to maintain a steady internal condition despite disturbances caused by changes in the environment \cite{Cannon1939}. At the organismal scale, familiar examples include the regulation of body temperature, blood glucose, and ionic concentrations \cite{Morrison1946, Cannon1929}. In many such examples, the quantity being regulated is associated with a steady state.

This paper builds on recent work that uses mathematical models of biological phenomena to study homeostasis. For example, see the work of Nijhout, Reed, Best, and collaborators \cite{Nijhout2004, Best2009, Nijhout2014B, Nijhout2015, Nijhout2018} that consider biochemical networks of metabolic signaling pathways. Such models often take the form of parameterized ODEs. In this paper, we specifically consider
	\begin{align}
		\diff{u}{t} &= F(u, \lambda, \cI)
		\label{homeo_defn_eqn}
	\end{align}
where $u\in \R^n$ is the vector of \textit{state variables}, $\lambda\in\R$ is a \textit{bifurcation parameter}, and $\cI\in \R$ is an \textit{input parameter}. Usually, the system depends on additional parameters, but these are suppressed for now. In this section, we also suppress $\lambda$ and follow Golubitsky and Stewart \cite{Golubitsky2017} in formulating an infinitesimal notion of homeostasis. 

Suppose $(u_0, \cI_0)$ is a linearly stable steady-state solution of \eqref{homeo_defn_eqn}. It follows from the implicit function theorem that there exists a family of stable steady-state solutions $\tilde{u}(\cI)$, defined on a neighborhood of $\cI_0$, such that $\tilde{u}(\cI_0) = u_0$. A smooth function $\psi:\R^n \to \R$ is called an \textit{observable}. Associated to $\psi$ and $\tilde{u}$ is the \textit{input-output function} $\zeta(\cI) = \psi(\tilde{u}(\cI))$. The input-output function plays a central role in the study of homeostasis, because it allows for different notions of \enquote{approximately constant} to be precisely formalized.

\begin{definition}\rm \label{steady_state_homeo_defn}
	Let $\zeta(\cI)$ be an input-output function. We say that $\zeta(\cI)$ exhibits
		\emph{infinitesimal homeostasis} at the point $\cI_0$  if
			\begin{equation} 	\label{ss_infinitesimal_homeo}
				\zeta'(\cI_0) = 0
			\end{equation}
where $'$ is $\frac{\partial}{\partial \cI}$.
\end{definition}

Definition \ref{steady_state_homeo_defn} provides a local formulation of homeostasis in terms of critical points of the input-output function. If infinitesimal homeostasis occurs at $\cI_0$, then Taylor's theorem implies that homeostasis occurs on a neighborhood of $\cI_0$. Motivated by the observations of Nijhout et al.~\cite{Nijhout2014}, Golubitsky and Stewart \cite{Golubitsky2017} introduced this infinitesimal point of view and showed that it allows for methods from singularity theory to be applied. In particular, the normal form and universal unfolding of the input-output function can be determined. The infinitesimal approach is further supported by the analysis of Reed et al., which shows that these methods work in settings such as feedforward excitation \cite[Theorem 2.2]{Reed2017} and kinetic homeostasis \cite[Theorem 4]{Reed2017}.

\subsection{Infinitesimal Period Homeostasis}\label{intro_period_homeo}

Section \ref{intro_steady_state_homeo} offers a glimpse of infinitesimal homeostasis theory for steady-state solutions of \eqref{homeo_defn_eqn}. Further references for the steady-state theory may be found in \cite{Golubitsky2020, Golubitsky2018, Wang2021, Huang2022, Duncan2024}. Although the steady-state setting is supported by a growing body of results, analogous principles for periodic solutions of \eqref{homeo_defn_eqn} are still in the early stages of development. This direction is of particular interest because oscillatory phenomena occur ubiquitously in biology. Some examples from Forger~\cite{Forger2017} are listed in Table \ref{intro_table1}.

         \begin{table}[h!]
            \centering
            \begin{tabular}{|l|l|}
                \hline
                 \textbf{Biological Rhythm} & \textbf{Period}  \\
                 \hline
                 electrical pulses in neurons & millisecond\\
                 \hline
                 firefly flashing & second\\
                 \hline 
                 calcium waves in cells & minute\\
                 \hline
                 embryonic development & hour\\
                 \hline
                 tidal rhythms & hour\\
                 \hline
                 circadian clocks & day\\
                 \hline
                 menstrual  cycles & month\\
                 \hline
            \end{tabular}
            \caption{Examples of biological rhythms and the orders of their periods \cite[p. 16]{Forger2017}.}
            \label{intro_table1}
        \end{table}
        \FloatBarrier

	Notably, the periods of some of these rhythms are known to exhibit period homeostasis \cite{BellPedersen2005}, by which we mean that the period is approximately constant on variation of an input parameter. A classic example is the temperature-compensation property of circadian rhythms \cite{Dunlap2004, Forger2017, Hatakeyama2012}. Here, the period remains approximately 24 hours despite changes in ambient temperature. The period of circadian rhythms is also known to be homeostatic with respect to changes in trophic conditions, which is called nutrient compensation \cite{Phong2013}. Temperature and nutrient compensation are universally conserved across a wide range of organisms \cite{Hatakeyama2014}. For mammals specifically, the period of the circadian clock is robust with respect to changes in transcription rates \cite{Dibner2009} and gene expression levels \cite{Kim2012}. 

Experimental studies also suggest that circadian rhythms in cyanobacteria and cultured human cells disappear through Hopf bifurcation as temperature decreases to a critical value, with the oscillation amplitude decreasing toward zero \cite{Murayama2017, Xiao2026}. See Section \ref{applications_empirical_sec} for further discussion. Taken together, these observations motivate a theory connecting period homeostasis with the bifurcation governing the existence of the oscillation.

	Our formulation of period homeostasis follows Antoneli~et~al.~\cite{Antoneli2025}. 
	Suppose $u_0(t)$ is a stable periodic solution of \eqref{homeo_defn_eqn} at 
	($\lambda_0,  \cI_0$) with minimal period $P_0$. 
	By the continuation of hyperbolic periodic solutions theorem, there is a smooth family of periodic solutions 
	$u(\cdot, \lambda, \cI)$ defined on a neighborhood of $(\lambda_0, \cI_0)$ with smoothly varying  minimal period $P(\lambda,\cI)$. 		This family of periodic solutions satisfies $u(t, \lambda_0, \cI_0) = u_0(t)$ and 
	$P(\lambda_0, \cI_0) = P_0$. 

\begin{definition}\rm \label{period_io_defn}
Let $u(t, \lambda, \cI)$ be the periodic solution of \eqref{homeo_defn_eqn}  obtained by continuation from the  stable periodic solution $u_0(t)$. The associated map
\begin{align*}
\cI \longmapsto P(\lambda_0, \cI)
\end{align*}
where $P(\lambda_0, \cI)$ is the minimal period of $u(t, \lambda_0, \cI)$ is called the \emph{period input-output function}.
\end{definition}

The following definition formalizes the infinitesimal notion of period homeostasis studied in this paper.

\begin{definition} \rm \label{periodic_homeo_defn}
Let $P(\lambda_0, \cI)$ be a period input-output function. We say that $P(\lambda_0, \cI)$ exhibits \emph{infinitesimal period homeostasis} (IPH) at $\cI_0$ if
\begin{equation} 
P_{\cI}(\lambda_0, \cI_0) = 0
\label{per_infinitesimal_homeo}
\end{equation}
If, in addition to \eqref{per_infinitesimal_homeo},
\begin{align}
P_{\cI \cI}(\lambda_0, \cI_0)\neq 0
\end{align}
then we say that $P(\lambda_0, \cI)$ exhibits \emph{simple infinitesimal period homeostasis} (SIPH) at $\cI_0$.
\end{definition}

It follows from Definition~\ref{periodic_homeo_defn} that, here, the period of a periodic solution of \eqref{homeo_defn_eqn} is the function of interest in terms of homeostatic behavior. Other contexts have motivated considering different functions of a periodic solution of \eqref{homeo_defn_eqn}. For example, Yu and Thomas \cite{Yu2022} define a notion of infinitesimal homeostasis for periodic solutions in terms of the average value over the period. Using a generalization of the infinitesimal shape response curve (iSRC) introduced by Wang et al. in \cite{Wang2021B}, Yu and Thomas derive a formula for the derivative of their input-output function with respect to the input parameter $\cI$.  For periodic phenomena such as circadian rhythms where the period is homeostatic, the notion of IPH is more appropriate. 

Our definition of IPH follows \cite{Antoneli2025}, but we take a different approach. We focus on periodic solutions that are spawned from a nondegenerate Hopf bifurcation. The advantage of this approach is that it allows us to exploit the $S^1$ symmetry of Liapunov--Schmidt normal form. In this setting, singularity-theoretic methods can be applied to reduce the problem of finding circadian-like behavior to calculating certain derivatives of the normal form equation (see Section \ref{intro_example}).

The main result is Theorem \ref{simple_thm} in Section \ref{intro_simultaneous_hopf_iph}. Roughly speaking, this theorem says: If there are a bifurcation parameter and an input parameter in the ODEs, then defining conditions for Hopf bifurcation and IPH can be satisfied generically. In a neighborhood of the singularity, circadian-like behavior is expected. This is because the neighborhood contains curves on which Hopf bifurcation and IPH occur.  The main mathematical tool used to reach these conclusions is the proof of Hopf bifurcation via Liapunov--Schmidt reduction and the implicit function theorem. 

Our main result, Theorem \ref{simple_thm}, is described in Section \ref{intro_simultaneous_hopf_iph}. In Section~\ref{intro_example}, we translate this result into an analytical procedure for finding IPH near Hopf bifurcation. Section~\ref{applications_sec} then illustrates the theory using an example of biological interest. Specifically, we look at a simple model of the mammalian circadian clock proposed by Kim and Forger \cite{Kim2012}. In dimensionless form, this model has two free parameters, which satisfy the requirements of our theory. The analysis of Pei et al. \cite{Pei2024} shows that this model undergoes Hopf bifurcation, so what we add pertains to the occurrence of period homeostasis. Using numerical simulation, we find three IPH curves. See Figure \ref{KF_fig1}.

\subsection{Hopf Bifurcation via Liapunov--Schmidt Reduction}\label{intro_hopf}

As described in Section \ref{intro_period_homeo}, the starting point for our theory is nondegenerate Hopf bifurcation. We now elaborate on this aspect of our approach by summarizing two results from bifurcation theory that are needed: reducing to an $S^1$-equivariant map on $\R^2$ via the Liapunov--Schmidt procedure and the Hopf bifurcation theorem.

Liapunov--Schmidt reduction is based on restating \eqref{homeo_defn_eqn} as an operator on loop space and introducing a \textit{period parameter} $\tau \in \R$. Rescale time in \eqref{homeo_defn_eqn} as $s = (1 + \tau)t$ and define $v(s) = u(t)$. Then the operator is
\begin{align}
\Phi(v, \lambda, \cI, \tau) = (1 + \tau) \diff{v}{s} - F(v, \lambda, \cI)
\label{operator_eqn}
\end{align}
where $v(s)$ is a $2\pi$-periodic state,  $\Phi: \cC_{2\pi}^1 \times \R \times \R \times \R\to \cC_{2\pi}$, and $\cC_{2\pi}$ ($\cC_{2\pi}^1$) is the space of continuous (continuously differentiable) $2\pi$-periodic functions. 
Note that $\Phi(v(s), \lambda, \cI, \tau) = 0$ if and only if  $u(t)$  is a  $\frac{2\pi}{1 + \tau}$-periodic solution of \eqref{homeo_defn_eqn}.

Before proceeding, we make an assumption that implies the kernel of the linearization of \eqref{operator_eqn} is two-dimensional.
\begin{enumerate}[label =\textbf{(H\arabic*)}, align = left]
\item \label{H1} \textbf{Simple eigenvalue condition:} Assume \eqref{homeo_defn_eqn} has a steady-state solution $u\equiv 0$ for all $(\lambda, \cI)$. Let $(\mathrm{d}F)_{0, \lambda, \cI}$ be the Jacobian matrix evaluated along the steady-state solution $u\equiv 0$. $(\mathrm{d}F)_{0,0,0}$ has simple eigenvalues $\pm i$ and all other eigenvalues have strictly negative real part. 
\end{enumerate}

Two standard simplifying assumptions are built into Hypothesis~\ref{H1}. First, the steady state has been translated to $u\equiv 0$. Second, time has been rescaled so that the critical eigenvalues are $\pm i$. If the original critical eigenvalues are $\pm \omega_0 i$, with $\omega_0>0$, then the period is scaled by a factor of $\omega_0^{-1}$. Finally, in conjunction with \ref{H3} stated below, the assumption that all remaining eigenvalues have strictly negative real part implies the periodic solutions born at the Hopf bifurcation are stable. 

Hypothesis \ref{H1}, together with symmetry on loop space, leads via Liapunov--Schmidt reduction to the following reduced mapping \cite[p.~344]{Golubitsky1985}.

\begin{theorem}\label{reduced_eqn_thm}
Small-amplitude periodic solutions of \eqref{homeo_defn_eqn} are in one-to-one correspondence with zeros of the $S^1$-equivariant system
\begin{align}
\varphi(x, y, \lambda, \cI,\tau) = p(x^2 + y^2, \lambda, \cI,\tau) 
\begin{bmatrix}
x\\
y
\end{bmatrix}
+ q(x^2 + y^2, \lambda, \cI, \tau)
\begin{bmatrix}
-y\\
x
\end{bmatrix}
\label{reduced_eqn}
\end{align}
Moreover, $p$ and $q$ satisfy
\begin{align}
p(0) = 0 \qquad p_\tau(0) = 0 \qquad q(0) = 0 \qquad q_\tau(0) =-1
\label{p_q_eqns}
\end{align}
\end{theorem}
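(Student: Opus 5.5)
The plan is to run the standard Liapunov--Schmidt reduction of Golubitsky--Schaeffer for the loop-space operator $\Phi$ in \eqref{operator_eqn}, with $\cI$ carried along as an extra parameter. Then $S^1$-equivariance (phase shift $\theta\cdot v(s)=v(s-\theta)$) forces the reduced map into the form \eqref{reduced_eqn}.

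\textbf{Kernel and range.} Let $L=(\mathrm{d}\Phi)_{0,0,0,0}=\frac{d}{ds}-A$, where $A=(\mathrm{d}F)_{0,0,0}$. Expanding in Fourier modes $e^{iks}$, $L$ acts on the $k$-th coefficient by $ik-A$. This is singular only for $k=\pm1$, because of the simple eigenvalues $\pm i$ in \ref{H1}. The remaining eigenvalues have negative real part, so they never equal $ik$, and the kernel at $k=0$ is trivial. Hence $\ker L$ is two-dimensional, spanned by $\mathrm{Re}(e^{is}c)$ and $\mathrm{Im}(e^{is}c)$, where $Ac=ic$.

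Using the $\cC_{2\pi}$-pairing $\langle u,v\rangle=\frac1{2\pi}\int_0^{2\pi}u\cdot v\,ds$, the adjoint is $L^*=-\frac{d}{ds}-A^T$. Its kernel is again two-dimensional, and $\operatorname{range}L=(\ker L^*)^\perp$ by Fredholm theory for this first-order periodic operator. So $L$ is Fredholm of index zero.

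\textbf{Equivariance.} Choose the complements $\mathcal M$ of $\ker L$ and $\mathcal N$ of $\operatorname{range}L$ to be $S^1$-invariant; the orthogonal ones work because the pairing is $S^1$-invariant. With these choices, the implicit function theorem solves $(I-E)\Phi(v_0+W(v_0,\lambda,\cI,\tau),\lambda,\cI,\tau)=0$ for $W$. Identify $\ker L\cong\R^2$ with $S^1$ acting by rotation, and set $\varphi(x,y,\lambda,\cI,\tau)=E\,\Phi(\cdots)$, expressed in coordinates. The resulting $\varphi$ is $S^1$-equivariant, by uniqueness in the implicit function theorem.

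Zeros of $\varphi$ near the origin correspond one-to-one to small zeros of $\Phi$, which are periodic solutions via the remark after \eqref{operator_eqn}. The $S^1$-orbits correspond to phase shifts.

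\textbf{Normal form of an equivariant map.} Every smooth $S^1$-equivariant map $\R^2\to\R^2$ has the form $p(x^2+y^2)\,(x,y)^T+q(x^2+y^2)\,(-y,x)^T$. In complex notation $z=x+iy$, a map commuting with rotations is $g(z)=z\,h(|z|^2)$ with $h=p+iq$ complex-valued. This uses the invariant-theory lemma, via Schwarz's theorem for smoothness in $|z|^2$, together with the observation that $g(z)/z$ is rotation invariant.

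\textbf{The conditions \eqref{p_q_eqns}.} These come from linearization. Since $\mathrm{d}\varphi=E\,L|_{\ker}=0$ at the origin, we get $p(0)=q(0)=0$.

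Next, differentiate in $\tau$: $\partial_\tau\mathrm{d}\varphi=E\,(\partial_\tau \mathrm{d}\Phi)|_{\ker}=E\,\frac{d}{ds}|_{\ker}$, using that $W$ is quadratic so its contributions vanish. On $\ker L$, $\frac{d}{ds}$ acts as rotation by $90^\circ$, i.e. $\mathrm{Re}(e^{is}c)\mapsto -\mathrm{Im}(e^{is}c)$. With the coordinate choice of basis, or with the sign convention of how $\mathcal N$ is identified with $\R^2$, this gives $\mathrm{d}\varphi_\tau=\begin{bmatrix}0&1\\-1&0\end{bmatrix}$. Hence $p_\tau(0)=0$ and $q_\tau(0)=-1$.

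\textbf{Main obstacle.} The main obstacle is the bookkeeping in this last step. The identification of $\operatorname{range}$ complements with $\R^2$ must be chosen compatibly with the rotation on $\ker L$, so that the signs come out as stated. The rest follows \cite[p.~344]{Golubitsky1985}, with $\cI$ inert throughout.
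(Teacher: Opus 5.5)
Your proposal is correct and follows essentially the same route as the paper, which does not reprove the result but cites the $S^1$-equivariant Liapunov--Schmidt reduction of Golubitsky--Schaeffer (Chapter VIII), with $\cI$ carried along as an inert parameter exactly as you do. Two small points to tighten. First, the $W$-terms drop out of $\partial_\tau\mathrm{d}\varphi(0)$ because $W(0,\lambda,\cI,\tau)\equiv 0$, $\mathrm{d}_vW(0)=0$ and $EL=0$, not because $W$ is quadratic; the mixed derivative $\partial_\tau\mathrm{d}_vW(0)$ itself need not vanish. Second, the normalization $p_\tau(0)=0$, $q_\tau(0)=-1$ comes out precisely when the coordinates on $\mathcal N=\ker L^*$ are $\langle v_i^*,\cdot\rangle$ with $v_1^*,v_2^*$ biorthogonal to $\mathrm{Re}(e^{is}c),\mathrm{Im}(e^{is}c)$; this choice also makes the $S^1$-action on the target the same rotation as on $\ker L$.
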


The significance of Theorem \ref{reduced_eqn_thm} is that it simplifies the problem of finding small-amplitude periodic solutions of \eqref{homeo_defn_eqn} to finding zeros of an $S^1$-equivariant map on $\R^2$. In order to conclude such zeros exist, we must strengthen our assumptions about \eqref{homeo_defn_eqn}. For easy reference, we collect all of our hypotheses for nondegenerate Hopf bifurcation into the following definition.  

\begin{definition}\rm \label{hopf_assumption_defn}
We say that \eqref{homeo_defn_eqn} satisfies the \emph{Hopf hypotheses} if \ref{H1} holds and the following conditions are satisfied.
\begin{enumerate}[label =\textbf{(H\arabic*)}, align = left, resume]
\item \label{H2} \textbf{Hopf crossing condition:} Let 
\[
\sigma(\lambda, \cI) \pm i \omega(\lambda, \cI)
\]
be simple eigenvalues of $(\mathrm{d}F)_{0, \lambda, \cI}$ such that $\sigma(0) = 0$ and $\omega(0) = 1$. The real part of these eigenvalues satisfies 
 \[
 \sigma_\lambda(0)\neq 0
 \]
\item \label{H3} \textbf{Cubic condition:} Let $p$ be as in Theorem \ref{reduced_eqn_thm}, and let $z = x^2$. Then $p$ satisfies 
\[
p_z(0)  < 0
\]
\end{enumerate}
\end{definition} 

These hypotheses yield the second main result we need from bifurcation theory, which is the nondegenerate Hopf bifurcation theorem. Using the implicit function theorem, solve the equations 
\[
p(z, \lambda, \cI, \tau) = 0 \qquad \text{and} \qquad q(z, \lambda, \cI, \tau) = 0
\]
locally for $z$ and $\tau$ as unique smooth functions of $(\lambda,\cI)$.  We denote the resulting functions by
\[
z = \tilde{Z}(\lambda, \cI) \qquad \text{and}\qquad \tau = T(\lambda, \cI)
\]
Thus, $\tilde{Z}(\lambda, \cI)$ and $T(\lambda, \cI)$ are the squared-amplitude and period-parameter functions associated with the corresponding periodic orbit, respectively. For our purposes, the crucial point of the nondegenerate Hopf bifurcation theorem is not only that there exists a unique periodic orbit, but also that its period is uniquely and smoothly determined by $T(\lambda, \cI)$.

\begin{theorem}\label{hopf_thm}
Assume the Hopf hypotheses (Definition \ref{hopf_assumption_defn}) are satisfied. Then the steady state $u \equiv 0$ undergoes nondegenerate Hopf bifurcation at the origin. For $(\lambda, \cI)$ on the side of the bifurcation where the steady state is unstable, there is a unique stable small-amplitude periodic orbit whose period is denoted by
\begin{align}
P(\lambda, \cI) = \frac{2\pi}{1 + T(\lambda,\cI)}
\label{period_eqn}
\end{align}
\end{theorem}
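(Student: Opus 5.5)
The plan is to work entirely with the reduced system of Theorem~\ref{reduced_eqn_thm}. By $S^1$-equivariance, nonzero zeros of $\varphi$ correspond to circles $x^2+y^2=z>0$. Moreover, since the vectors $(x,y)^T$ and $(-y,x)^T$ are linearly independent when $(x,y)\neq 0$, such a circle is a zero exactly when $p(z,\lambda,\cI,\tau)=0$ and $q(z,\lambda,\cI,\tau)=0$.

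\textbf{Step 1: solving for $z$ and $\tau$.} I would apply the implicit function theorem to the map $(z,\tau)\mapsto(p,q)$ at the origin. Its Jacobian is
\[
\begin{bmatrix} p_z(0) & p_\tau(0)\\ q_z(0) & q_\tau(0)\end{bmatrix}=\begin{bmatrix} p_z(0) & 0\\ q_z(0) & -1\end{bmatrix},
\]
using \eqref{p_q_eqns}. Its determinant is $-p_z(0)\neq 0$ by \ref{H3}. This gives smooth functions $z=\tilde Z(\lambda,\cI)$ and $\tau=T(\lambda,\cI)$, unique near the origin.

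\textbf{Step 2: locating the bifurcation and the side where orbits exist.} I would relate $p_\lambda(0)$ to the crossing condition. The standard computation from \cite{Golubitsky1985} gives $p(0,\lambda,\cI,\tau)$ proportional to $\sigma(\lambda,\cI)$ after eliminating $\tau$ via $q(0,\cdot)=0$, with $\tau=\omega-1$. Hence $p_\lambda(0)$ is a nonzero multiple of $\sigma_\lambda(0)$, so $\tilde Z_\lambda(0)=-p_\lambda/p_z\neq0$ by \ref{H2}.

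Next, $\tilde Z$ vanishes exactly on the Hopf curve $\sigma=0$. This curve is a smooth graph $\lambda=\lambda_H(\cI)$ by the implicit function theorem, since $\sigma_\lambda(0)\neq 0$. Because $p_z<0$, the sign of $\tilde Z$ near $z=0$ agrees with the sign of $p(0,\cdot)$, which tracks $\sigma$. So $\tilde Z>0$ exactly where $\sigma>0$, that is, on the side where the steady state is unstable. This yields a unique circle of zeros with $z>0$ there, hence a unique small-amplitude periodic orbit. Undoing the rescaling $s=(1+\tau)t$ shows the orbit has period $2\pi/(1+T(\lambda,\cI))$, which is \eqref{period_eqn}.

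\textbf{Step 3: stability.} This is the main obstacle. Stability follows from exchange of stability: the Floquet exponent of the orbit in the critical direction has the sign of $p_z\,\tilde Z$ to leading order, namely $2zp_z(z)$ evaluated at the zero. This is negative by \ref{H3}. The remaining Floquet exponents stay near the eigenvalues of $(\mathrm{d}F)_{0,0,0}$ with negative real part, by \ref{H1} and continuity.

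Making the Floquet identification rigorous requires the standard result (Golubitsky--Schaeffer, Ch.~VIII) that the stability of the orbit is determined by the sign of $\partial(zp)/\partial z$ along the branch. I would cite that theorem rather than reprove it. With it in hand, nondegenerate Hopf bifurcation at the origin and the claimed stable orbit follow.
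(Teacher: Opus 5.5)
Your proposal is correct and follows the paper's proof. Your single implicit function theorem on $(z,\tau)$, with Jacobian determinant $-p_z(0)\neq 0$, is the paper's two-step argument (first solve $q=0$ for $\tau=\tilde T$ using $q_\tau(0)=-1$, then $p=0$ for $z$ using $p_z(0)+p_\tau(0)\tilde T_z(0)=p_z(0)\neq 0$) done at once, and both you and the paper defer stability to Golubitsky--Schaeffer. Your Step 2, which uses \ref{H2} to show $\tilde Z>0$ exactly where $\sigma>0$, fills in something the paper's proof never argues explicitly and instead leaves to that citation.
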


 A proof of Theorem \ref{hopf_thm} is given in Section \ref{liapunov_schmidt_sec}.

\subsection{Simultaneous Hopf Bifurcation and Period Homeostasis}\label{intro_simultaneous_hopf_iph}

In Sections \ref{intro_period_homeo} and \ref{intro_hopf} we discussed SIPH and nondegenerate Hopf bifurcation, respectively. We now bring these two concepts together to provide a mathematical explanation for biological phenomena where the period is tightly regulated, such as circadian rhythms. 

Our main result shows that, locally, the dynamics of \eqref{homeo_defn_eqn} are characterized by a certain type of graph. Before stating the result, we define the relevant geometric objects and clarify the hypotheses needed to arrive at the desired conclusion. 

The first is the \textit{Hopf curve} $\cH$, along which the steady-state solution of \eqref{homeo_defn_eqn} loses stability through Hopf bifurcation to a small-amplitude periodic orbit. Formally, 
\begin{align}
\cH = \{(\lambda, \cI): \sigma(\lambda, \cI) = 0\}
\label{hopf_curve_defn}
\end{align}
Under the hypotheses of Theorem \ref{simple_thm}, $\cH$ is locally the graph of a unique smooth function $\cI_{\hb}(\lambda)$.

The second is the \textit{IPH curve} $\cS$, along which the period of the periodic solution is constant with respect to $\cI$. Mathematically,
\begin{align}
\cS = \{(\lambda,\cI): \text{ a periodic solution exists and } P_{\cI}(\lambda,\cI) = 0\}
\label{iph_curve_defn}
\end{align}
Under the hypotheses of Theorem \ref{simple_thm}, $\cS$ is also locally the graph of a unique smooth function $\cI_{\ph}(\lambda)$. 

We emphasize that points in $\cS$ correspond to points of IPH in 
\eqref{homeo_defn_eqn}. Moreover, together, the curves $\cH,\cS$ 
form the central geometric object studied in this paper.
\begin{definition}\rm \label{hh_graph_defn}
A \emph{Hopf--Homeostasis (HH) graph} is a subset of the $\lambda \cI$-plane consisting of a Hopf bifurcation curve $\cH$ and an IPH curve $\cS$.
\end{definition}

The key point is that when nondegenerate Hopf bifurcation and SIPH occur simultaneously at a point in the $\lambda \cI$-plane, the local dynamics are organized by an HH graph. To obtain this picture, we supplement the Hopf hypotheses (Definition \ref{hopf_assumption_defn}) with conditions that ensure $\cH$ and $\cS$ are smooth curves that intersect transversely. As above, $\sigma$ denotes the real part of the critical eigenvalues, and $T$ is the unique smooth function that determines the period parameter $\tau$.

\begin{definition}\rm \label{siph_assumption_defn}
We say that \eqref{homeo_defn_eqn} satisfies the \emph{SIPH hypotheses} if each of the following conditions holds.
\begin{enumerate}[label = \textbf{(P\arabic*)}, align = left]
\item \label{P1}\textbf{Homeostasis crossing condition:} $\sigma_\cI(0) \neq 0$
\item \label{P2}\textbf{Curve condition:} $T_\cI(0) = 0 $ and $T_{\cI \cI}(0) \neq 0$
\item \label{P3} \textbf{Transversality condition:} $(\sigma_{\lambda}T_{\cI \cI} - \sigma_{\cI}T_{\cI \lambda})(0)\neq 0$
\end{enumerate}
\end{definition}

Conditions \ref{P1} and \ref{P2} ensure that, via the implicit function theorem, $\cH$ and $\cS$ are uniquely determined by the graphs of smooth functions of $\lambda$. Condition \ref{P3}  ensures that $\cH$ and $\cS$ meet transversely, which rules out pathological behavior such as the coincidence of the two curves. 

We now state the main result of this paper. It guarantees the existence of an HH graph, such as the one shown in Figure \ref{siph_example_subfigA}. A proof is given in Section \ref{siph_sec}. 

\begin{theorem}\label{simple_thm}
Suppose \eqref{homeo_defn_eqn} satisfies the Hopf (Definition \ref{hopf_assumption_defn}) and SIPH (Definition \ref{siph_assumption_defn}) hypotheses. Then in a neighborhood of the origin in the $\lambda \cI$-plane, \ref{S1}, \ref{S2}, and \ref{S3} hold:

\begin{enumerate}[label=\textbf{(S\arabic*)},align=left]
\item\label{S1} There is a unique smooth function $\cI_{\hb}(\lambda)$ such that $\cI_{\hb}(0) = 0$ and the Hopf curve is
\[
\cH = \{(\lambda,\cI): \cI = \cI_{\hb}(\lambda)\}
\]

\item\label{S2} There is a unique smooth function $\cI_{\ph}(\lambda)$ such that  $\cI_{\ph}(0) = 0$ and the IPH curve is
\[
\cS = \{(\lambda, \cI): \text{a periodic solution exists and } \cI = \cI_{\ph}(\lambda)\}
\]
Furthermore, $P_{\cI \cI}(\lambda, \cI_{\ph}(\lambda))\neq 0$. 

\item \label{S3}  $\cHB$ and $\cPH$ intersect transversely  at the origin.
\end{enumerate}

\end{theorem}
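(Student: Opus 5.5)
The plan is to apply the implicit function theorem twice, once to $\sigma$ and once to $T_\cI$, and then compare the tangent directions of the two resulting curves at the origin.

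\textbf{(S1).} By \ref{H1}--\ref{H2}, $\sigma(\lambda,\cI)$ is a smooth function near the origin with $\sigma(0)=0$. By \ref{P1}, $\sigma_\cI(0)\neq0$. The implicit function theorem then gives a unique smooth $\cI_{\hb}(\lambda)$ with $\cI_{\hb}(0)=0$ whose graph is the zero set of $\sigma$, which is exactly $\cH$ by \eqref{hopf_curve_defn}. Implicit differentiation gives the slope
\[
\cI_{\hb}'(0)=-\frac{\sigma_\lambda(0)}{\sigma_\cI(0)}.
\]

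\textbf{(S2).} From \eqref{period_eqn} we have $P_\cI=-2\pi T_\cI/(1+T)^2$. Near the origin $1+T\neq0$ because $T(0)=0$. Hence $P_\cI=0$ if and only if $T_\cI=0$, so $\cS$ is the zero set of $G:=T_\cI$ intersected with the region where a periodic solution exists.

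\begin{itemize}
\item By \ref{P2}, $G(0)=0$ and $G_\cI(0)=T_{\cI\cI}(0)\neq0$. The implicit function theorem yields a unique smooth $\cI_{\ph}(\lambda)$ with $\cI_{\ph}(0)=0$ and slope
\[
\cI_{\ph}'(0)=-\frac{T_{\cI\lambda}(0)}{T_{\cI\cI}(0)}.
\]
\item On this curve, differentiating $P_\cI$ again and using $T_\cI=0$ gives $P_{\cI\cI}=-2\pi T_{\cI\cI}/(1+T)^2$. This is nonzero in a small enough neighborhood, by continuity of $T_{\cI\cI}$.
\end{itemize}

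One subtlety needs care: $T$ must be a smooth function on a full neighborhood of the origin, including where $\tilde Z<0$, so that $T_\cI$ and $T_{\cI\cI}$ make sense at the origin.

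\begin{itemize}
\item I would handle this by solving $p=q=0$ for $(z,\tau)$ with $z$ allowed to be negative, as set up before Theorem~\ref{hopf_thm}.
\item The Jacobian of $(p,q)$ with respect to $(z,\tau)$ at $0$ is nonsingular, since $p_\tau(0)=0$, $q_\tau(0)=-1$ and $p_z(0)<0$.
\item The physical set $\cS$ is then the portion of the graph of $\cI_{\ph}$ where $\tilde Z\ge0$, which is what the qualifier ``a periodic solution exists'' in \ref{S2} encodes.
\end{itemize}

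\textbf{(S3).} The two curves meet transversely at the origin if and only if their slopes differ. Using the two slope formulas above and clearing denominators, this is equivalent to
\[
\sigma_\lambda T_{\cI\cI}-\sigma_\cI T_{\cI\lambda}\neq0,
\]
which is exactly \ref{P3}.

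\textbf{Main obstacle.} The only step beyond bookkeeping is the smoothness of $T$ across the Hopf curve, justified as above. I also expect one may want to check that $\sigma$ is related to $p_\lambda$ and $p_\cI$ at $z=0$, so that $\cH$ coincides with $\{\tilde Z=0\}$. Specifically, if $p(0,\lambda,\cI,T)$ is a nonzero multiple of $\sigma$, then $\tilde Z=0$ exactly on $\cH$, and the side where periodic orbits exist is determined by the sign of $\sigma$. This matches Theorem~\ref{hopf_thm}, and I would cite the standard identification $p(0,\lambda,\cI,\tau)=\sigma(\lambda,\cI)$ up to a factor from \cite{Golubitsky1985}.
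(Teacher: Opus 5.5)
Your proposal is correct and follows the paper's proof essentially step for step. You apply the implicit function theorem to $\sigma$ (via \ref{P1}) and to $T_{\cI}$ (via \ref{P2}), then compare the slopes $-\sigma_\lambda/\sigma_\cI$ and $-T_{\cI\lambda}/T_{\cI\cI}$ at the origin to recover \ref{P3}; your explicit formula $P_\cI=-2\pi T_\cI/(1+T)^2$ and your joint implicit-function argument for smoothness of $T$ across $\cH$ only make precise what the paper leaves implicit in Section~\ref{liapunov_schmidt_sec}.

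Your closing aside is not needed for (S1)--(S3). If you keep it, note that only the restriction $p(0,\lambda,\cI,\tilde T(0,\lambda,\cI))$, with $\tilde T$ solving $q=0$, is guaranteed to be a nonvanishing multiple of $\sigma$; this is not guaranteed for $p(0,\lambda,\cI,\tau)$ at arbitrary $\tau$.
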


\begin{figure*}[h!]
            \centering
            \begin{subfigure}[b]{0.475\textwidth}
                \centering
            \includegraphics[width=\textwidth]{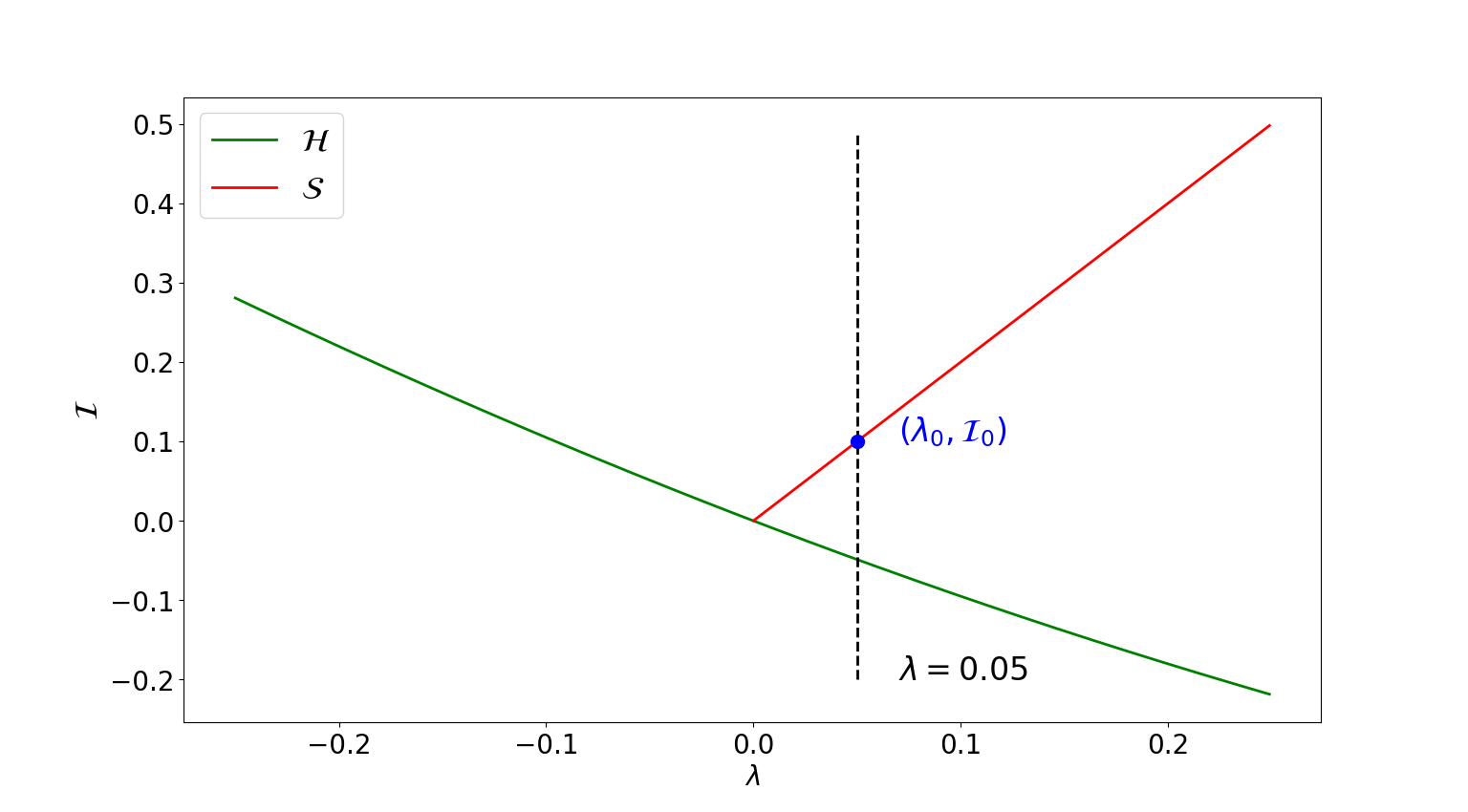}
	      \caption[]%
                {{\small} HH Graph}    
                \label{siph_example_subfigA}
            \end{subfigure}
            \hfill
            \begin{subfigure}[b]{0.475\textwidth}  
                \centering 
                \includegraphics[width=\textwidth]{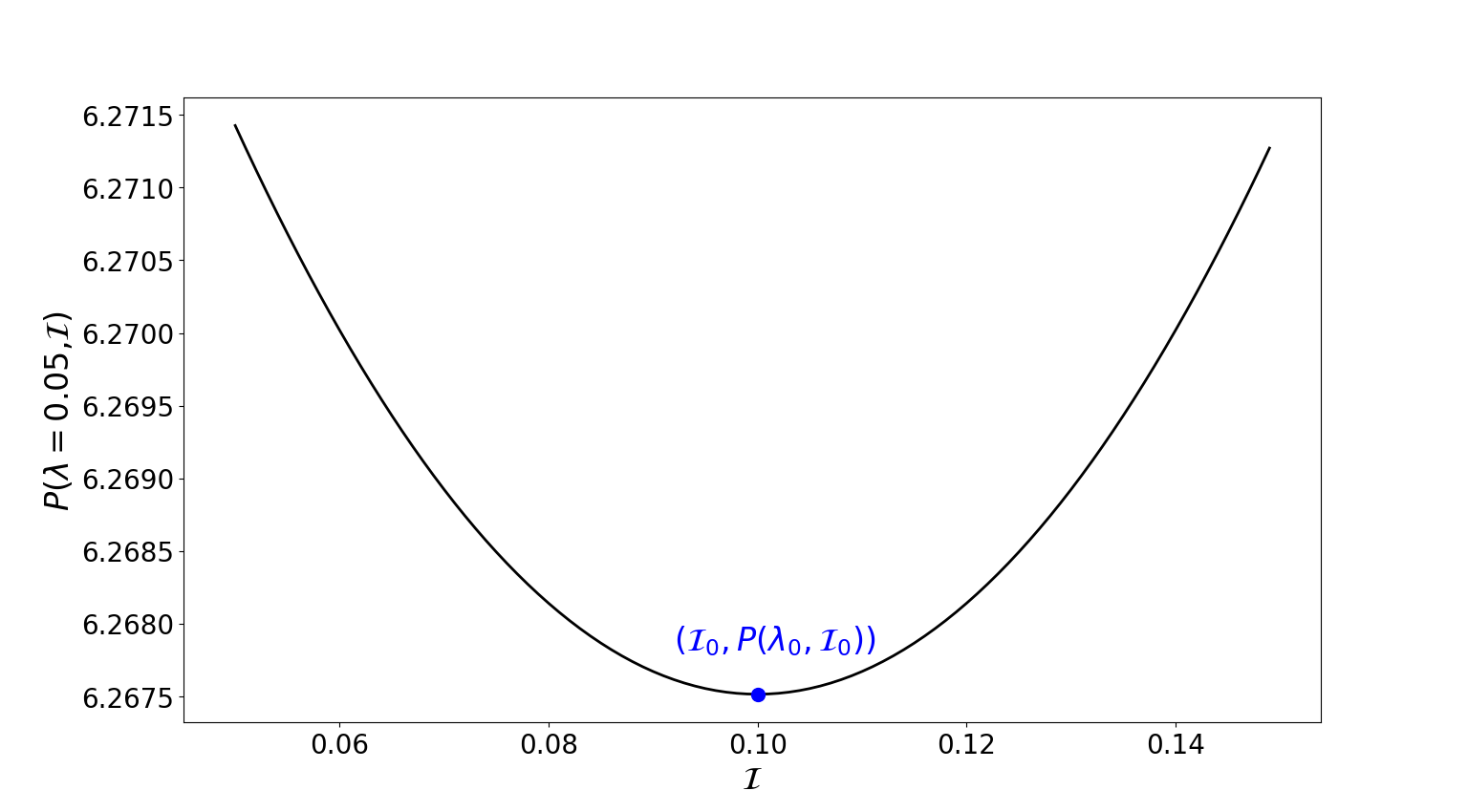}
	       \caption[]%
                {{\small }Period Input-Output Function}   
                \label{siph_example_subfigB}
            \end{subfigure}
            \caption{\small  A representative HH graph is shown in Figure~\ref{siph_example_subfigA}. Figure~\ref{siph_example_subfigB} is obtained by fixing $\lambda = 0.05$ in the parameter plane and plotting the corresponding period input-output function $P(0.05, \cI)$. The graph was produced from the Liapunov--Schmidt normal form \eqref{reduced_eqn} by choosing $p(r, \lambda, \cI, \tau) = \frac{1}{2}\cI^2 + \lambda \cI + \cI + \lambda - r$, $q(r, \lambda, \cI, \tau) = -\frac{3}{4}\cI^2 - \cI - \lambda - \tau + r$, and $r = x^2 + y^2$. The Hopf curve $\cH$ separates steady-state and periodic regimes, while the IPH curve $\cS$ identifies where the variation in the period with respect to $\cI$ is at its smallest. Thus, along a fixed-$\lambda$-slice, the period input-output function has smallest variation near the point where the slice meets $\cS$, illustrating SIPH. } 
\label{siph_example_fig}
\end{figure*}
\FloatBarrier

The key takeaway from Theorem \ref{simple_thm} is that when nondegenerate Hopf bifurcation and SIPH occur simultaneously, there generically exists an HH graph in which the Hopf curve $\cH$ and IPH curve $\cS$ intersect transversely. The Hopf curve $\cH$ separates the $\lambda \cI$-plane into steady-state and periodic regimes. Within the periodic regime, the IPH curve $\cS$ further divides the $\lambda \cI$-plane according to the monotonicity of the period with respect to $\cI$. Along $\cS$, the variation in the period is at its smallest. Together, these two curves provide a mathematical explanation for circadian-like behavior. Namely, the oscillation arises via nondegenerate Hopf bifurcation, while the approximately constant period is explained by SIPH. 

Figure \ref{siph_example_subfigA} illustrates the essential features described above that are true for any HH graph. In Section \ref{siph_sec}, we show that under the hypotheses of Theorem \ref{simple_thm}, every HH graph falls into one of four types, which are determined by the signs of $\sigma_{\cI}(0)$ and $T_{\cI \cI}(0)$. More precisely, Corollary \ref{sigma_i_cor} shows that the sign of $\sigma_{\cI}(0)$ determines which side of $\cH$ corresponds to the steady-state regime, while Corollary \ref{monotonicity_cor} shows that the sign of $T_{\cI \cI}(0)$ determines on which side of $\cS$ the period is increasing. As discussed in Section \ref{taylor_coeff_sec}, both quantities can be computed directly from $F$ in \eqref{homeo_defn_eqn}.

To state this classification concisely, let $\cH_{-}$ and $\cH_{+}$ denote the regions below and above the Hopf curve $\cH$, respectively. Similarly, let $\cS_{-}$ and $\cS_{+}$ denote the regions below and above the IPH curve $\cS$, respectively. 

We formalize the previous remarks in the next theorem, which also specifies the normal form of the period input-output function. This theorem follows directly from Corollaries \ref{sigma_i_cor}-\ref{normal_form_cor}. The latter are proved in Section \ref{siph_sec}.

\begin{theorem}\label{classification_thm}
Suppose \eqref{homeo_defn_eqn} satisfies the Hopf (Definition \ref{hopf_assumption_defn}) and SIPH (Definition \ref{siph_assumption_defn}) hypotheses. Then the HH graph and normal form of the period input-output function are classified by Table \ref{siph_table}.
\end{theorem}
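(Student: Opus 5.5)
Theorem \ref{classification_thm} has three parts: which side of $\cH$ is periodic, on which side of $\cS$ the period increases in $\cI$, and the normal form. I would prove each in turn from Theorem \ref{simple_thm} and the reduced equations $p=q=0$ of Theorem \ref{reduced_eqn_thm}. The idea is to reduce everything to the signs of $\sigma_\cI(0)$ and $T_{\cI\cI}(0)$, which index the four rows of Table \ref{siph_table}.

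\textbf{Side of $\cH$.} Fix $\lambda$ and move in $\cI$ across $\cH$. Since $\cH$ is the graph $\cI=\cI_{\hb}(\lambda)$ and \ref{P1} gives $\sigma_\cI(0)\neq 0$, the sign of $\sigma(\lambda,\cI)$ equals $\sgn(\sigma_\cI(0))\,\sgn(\cI-\cI_{\hb}(\lambda))$ near the origin. The steady state is unstable exactly where $\sigma>0$, and by Theorem \ref{hopf_thm} this is where the stable periodic orbit lives. So the periodic regime is $\cH_+$ when $\sigma_\cI(0)>0$ and $\cH_-$ when $\sigma_\cI(0)<0$. This is the content of Corollary \ref{sigma_i_cor}.

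As a consistency check, I would verify the same conclusion through $\tilde Z$. The standard Liapunov--Schmidt identification gives $p_\lambda(0)$ and $p_\cI(0)$ proportional to $\sigma_\lambda(0)$ and $\sigma_\cI(0)$ with a positive factor. Differentiating $p(\tilde Z,\lambda,\cI,T)=0$ at the origin, and using \ref{H3} together with $p_\tau(0)=0$, gives $\tilde Z_\cI(0)=-p_\cI(0)/p_z(0)$. This has the sign of $\sigma_\cI(0)$, so $\tilde Z>0$ exactly on the unstable side.

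\textbf{Monotonicity across $\cS$.} From $P=2\pi/(1+T)$ we get
\[
P_\cI=-\frac{2\pi T_\cI}{(1+T)^2},
\]
so $\cS$ is the zero set of $T_\cI$. By \ref{P2}, $T_{\cI\cI}(0)\neq0$, so on each fixed-$\lambda$ slice $T_\cI$ changes sign at $\cI_{\ph}(\lambda)$, with $\sgn T_\cI=\sgn(T_{\cI\cI}(0))\,\sgn(\cI-\cI_{\ph}(\lambda))$. Hence $P$ is increasing in $\cI$ on $\cS_-$ when $T_{\cI\cI}(0)>0$, and on $\cS_+$ when $T_{\cI\cI}(0)<0$ (Corollary \ref{monotonicity_cor}).

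For the normal form I also need $P_{\cI\cI}$ on $\cS$. There
\[
P_{\cI\cI}=-\frac{2\pi T_{\cI\cI}}{(1+T)^2}
\]
exactly, because $T_\cI=0$ kills the other term. By continuity this has sign $-\sgn T_{\cI\cI}(0)$, which sharpens \ref{S2}.

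\textbf{Normal form.} For fixed $\lambda$ with the slice meeting $\cS$ in the periodic regime, $\cI\mapsto P(\lambda,\cI)$ has a nondegenerate critical point at $\cI_{\ph}(\lambda)$. Singularity theory (Morse lemma in one variable, i.e.\ the $\cI^2$ singularity up to right equivalence and additive constant) then gives normal form $\pm\cI^2$, with sign $\epsilon=-\sgn T_{\cI\cI}(0)$. So the period input-output function is equivalent to $P_0+\epsilon\,\cI^2$, a maximum if $T_{\cI\cI}(0)>0$ and a minimum otherwise (Corollary \ref{normal_form_cor}). Combining the two binary signs produces the four HH-graph types of Table \ref{siph_table}.

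\textbf{Main obstacle.} The delicate point is domain: $P$ is only defined where $\tilde Z>0$, so the normal form must be stated on the half-neighborhood beyond $\cH$. The plan is to use \ref{S3} to choose slices $\lambda$ (of the sign fixed by $\sigma_\lambda$ and the side from the first part) such that $\cI_{\ph}(\lambda)$ lies strictly inside the periodic region. Transversality ensures $\cI_{\ph}(\lambda)-\cI_{\hb}(\lambda)$ has a definite sign proportional to $\lambda$. The Morse argument then applies on a small interval around $\cI_{\ph}(\lambda)$ that stays inside the periodic region.
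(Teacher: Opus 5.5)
Your proposal is correct and follows essentially the same route as the paper, which obtains Theorem~\ref{classification_thm} from Corollaries~\ref{sigma_i_cor}, \ref{monotonicity_cor}, and \ref{normal_form_cor}. It uses exactly your three arguments: monotonicity of $\sigma$ in $\cI$ across $\cH$; the sign change of $T_{\cI}$ across $\cS$ combined with $P=2\pi/(1+T)$; and the one-variable Morse-type normal form (cited there as Theorem 4.4 of Poston--Stewart) with sign $-\sgn T_{\cI\cI}(0)$. Your explicit handling of the domain of $P$ is a refinement the paper glosses over, but one parenthetical is imprecise: the sign of $\lambda$ for which $\cI_{\ph}(\lambda)$ lies in the periodic regime is $\sgn\bigl((\sigma_{\lambda}T_{\cI\cI}-\sigma_{\cI}T_{\cI\lambda})(0)\,T_{\cI\cI}(0)\bigr)$, so it is fixed by the \ref{P3} quantity and $T_{\cI\cI}(0)$, not by $\sigma_\lambda$ alone.
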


\begin{table}[h!]
\centering

\renewcommand{\arraystretch}{1.25}
\setlength{\extrarowheight}{2pt}

\begin{tabular}{c c p{0.68\linewidth}}
\hline
$\operatorname{sgn}\sigma_{\mathcal I}(0)$ &
$\operatorname{sgn}T_{\mathcal I\mathcal I}(0)$ &
Qualitative consequences \\ 
\hline
$+$ & $+$ &
\textbf{Steady-State regime:} $\cH_{-}$ \\
& & \textbf{Periodic regime:} $\cH_{+}$\\
& & \textbf{Monotonicity:} period increases in $\cS_{-}$, decreases in $\cS_{+}$ \\
& & \textbf{Normal form:} $P(\mathcal I) = -\mathcal I^2$ \\ 
\hline
$+$ & $-$ &
\textbf{Steady-State regime:} $\cH_{-}$ \\
& & \textbf{Periodic regime:} $\cH_{+}$ \\
& & \textbf{Monotonicity:} period decreases in $\cS_{-}$, increases in $\cS_{+}$ \\
& & \textbf{Normal form:} $P(\mathcal I) = +\mathcal I^2$ \\
\hline
$-$ & $+$ &
\textbf{Steady-State regime:} $\cH_{+}$ \\
& & \textbf{Periodic regime:} $\cH_{-}$ \\
& & \textbf{Monotonicity:} period increases in $\cS_{-}$, decreases in $\cS_{+}$ \\
& & \textbf{Normal form:} $P(\mathcal I) = -\mathcal I^2$ \\
\hline
$-$ & $-$ &
\textbf{Steady-State regime:} $\cH_{+}$ \\
& & \textbf{Periodic regime:} $\cH_{-}$ \\
& & \textbf{Monotonicity:} period decreases in $\cS_{-}$, increases in $\cS_{+}$ \\
& & \textbf{Normal form:} $P(\mathcal I) = +\mathcal I^2$ \\
\hline
\end{tabular}
\caption{Classification of the HH graph and normal form of the period input-output function. The sign of $\sigma_{\cI}(0)$ determines whether the steady-state regime lies above or below the Hopf curve $\cH$, while the sign of $T_{\cI \cI}(0)$ determines whether the normal form of the period input-output function is $\cI^2$ or $-\cI^2$.}
\label{siph_table}
\end{table}
\FloatBarrier

\subsection{An Analytical Procedure for Finding Period Homeostasis Near Hopf Bifurcation}\label{intro_example}

Having described our results, we now summarize an analytical procedure for finding IPH. Starting with \eqref{homeo_defn_eqn}, the goal is to verify the hypotheses, then approximate the Hopf curve $\cH$ and IPH curve $\cS$ near the origin. The steps of the procedure are given in Table \ref{iph_procedure_table}. References are also provided for computing relevant derivatives or justifying certain steps. 

\begin{table}[htbp]
\centering
\small
{
\renewcommand{\arraystretch}{1.5}
\begin{tabularx}{\textwidth}{>{\raggedright\arraybackslash}p{0.07\textwidth}
                            >{\raggedright\arraybackslash}p{0.23\textwidth}
                            >{\raggedright\arraybackslash}X
                            >{\raggedright\arraybackslash}p{0.17\textwidth}}
\toprule
\textbf{Step} & \textbf{Task} & \textbf{Condition or calculation} & \textbf{Reference} \\
\midrule

1 &
Translate the steady state&Satisfy the first assumption of \ref{H1}: translate the steady state to $u\equiv 0$& N/A
\\

2 &
Locate Hopf bifurcation &
Check \ref{H1}: $\sigma(0) = 0$; after rescaling time, $\omega(0) = 1$
& N/A
\\

3 &
Reduce to normal form &
Under \ref{H1}, apply Liapunov--Schmidt reduction to obtain an $S^1$-equivariant map with coefficients $p$ and $q$
& Theorem \ref{reduced_eqn_thm}
\\

4 &
Verify Hopf crossing &
Check \ref{H2}: $\sigma_{\lambda}(0) \neq 0$
& Proposition 3.3 in \cite[p. 352]{Golubitsky1985}
\\

5 &
Verify the cubic condition &
Check \ref{H3}: $p_z(0) <  0$
& Proposition 3.3 in \cite[p. 352]{Golubitsky1985}
\\

6 &
Verify homeostasis crossing &
Check \ref{P1}: $\sigma_{\mathcal I}(0)\neq 0$
& Proposition 3.3 in \cite[p. 352]{Golubitsky1985}, with $\lambda$ replaced by $\cI$
\\

7 &
Locate IPH &
Check the defining condition in \ref{P2}: $T_{\cI}(0)=0$
& Proposition \ref{linear_t_prop}
\\

8 &
Verify nondegenerate IPH &
Check the nondegeneracy condition in \ref{P2}: $T_{\cI \cI}(0)\neq 0$
& Proposition \ref{quadratic_t_prop}
\\

9 &
Verify transversality &
Check \ref{P3}:
$(\sigma_{\lambda} T_{\cI \cI} - \sigma_{\cI} T_{\cI \lambda})(0)\neq 0$ 
& For $T_{\cI \lambda}(0)$, refer to Proposition \ref{quadratic_t_prop}
\\

10 &
Approximate the HH graph &
Use \ref{S1} and \ref{S2}:
$\cI_{\hb}(\lambda) \approx -\frac{\sigma_{\lambda}}{\sigma_{\cI}}(0)\lambda$ and
$\cI_{\ph}(\lambda)\approx - \frac{T_{\cI \lambda}}{T_{\cI \cI}}(0) \lambda$.
Condition \ref{S3} gives the transverse intersection
& Proof of Theorem \ref{simple_thm}
\\

\bottomrule
\end{tabularx}
}
\caption{Analytical procedure for detecting IPH near Hopf bifurcation.}
\label{iph_procedure_table}
\end{table}
\FloatBarrier

The formulas referenced in Table~\ref{iph_procedure_table} assume the simplifications in Step 1: the steady state has been translated to the origin, and time has been rescaled so that the critical eigenvalues are $\pm i$. To work in the original coordinates and time scale, one must express the derivatives of the simplified vector field in terms of derivatives of the original one. Such calculations are standard in the degenerate Hopf bifurcation literature. For example, see Farr et al.~\cite{Farr1989}.

\subsection{Outline of Paper}\label{intro_outline}

We end Section \ref{intro_sec} by outlining the remainder of the paper. Section \ref{applications_sec} reviews evidence for period homeostasis, including temperature compensation (Section \ref{applications_empirical_sec}) and simulations of a mammalian circadian clock model proposed by Kim and Forger \cite{Kim2012} (Section \ref{applications_model_sec}), which display the behaviors predicted by our theory. Section \ref{results_sec} develops the analysis by deriving a smooth period input-output function via Liapunov--Schmidt reduction (Section \ref{liapunov_schmidt_sec}), proving the main result (Section \ref{siph_sec}), and calculating relevant Taylor coefficients of $T$ (Section \ref{taylor_coeff_sec}) to help verify the SIPH hypotheses (Definition \ref{siph_assumption_defn}). Section \ref{conclusion_sec} concludes with a discussion of implications and directions for future work. 

\section{Support for Period Homeostasis} \label{applications_sec}
	This section answers a central question: \enquote{Why is the theory presented in this paper important?} A large portion of biological rhythms research focuses on circadian rhythms, which the reader will recall from Section \ref{intro_period_homeo} is a prime example of period homeostasis.  Section \ref{applications_empirical_sec} elaborates on the temperature-compensation property of circadian rhythms, with emphasis placed on empirical evidence. Section \ref{applications_model_sec} takes a more mathematical approach by studying a particular model of the mammalian circadian clock. The mathematical model discussed in Section \ref{applications_model_sec} was proposed by Kim and Forger \cite{Kim2012}, who point out that it is important for the period to be robust with respect to variation in gene dosage. Numerical simulations indicate that IPH helps explain this robustness.

    \subsection{Empirical Evidence: Temperature Compensation}\label{applications_empirical_sec}

	Before presenting empirical evidence in support of temperature compensation, let us clarify how the term \enquote{circadian rhythm} is used in this paper. Roughly speaking, circadian rhythms are those for which the process repeats approximately every 24 hours. Hence the name \enquote{circadian}, which comes from the Latin \enquote{circa}, meaning \enquote{around}, and \enquote{dies}, meaning \enquote{day}. However, we prefer the more formal point of view taken by Dunlap et al. \cite[p. 68]{Dunlap2004}, who say: \enquote{Circadian rhythms are defined by major, observable, and well-established criteria, not by a molecular mechanism.}

	A biological rhythm must satisfy three criteria in order to be called \textit{circadian}. First, the rhythm has a free-running period of approximately 24 hours. In other words, under constant conditions such as lighting and temperature, the period of the rhythm remains about 24 hours. Free-running values for most species are between 23 and 25 hours. Second, the rhythm is temperature-compensated, meaning that the free-running period length remains approximately constant over a range of ambient temperatures. Third, the rhythm is entrainable. This means that the rhythm is capable of synchronizing with external stimuli that have a similar period. An important stimulus for circadian rhythms is the 24-hour light-dark cycle resulting from Earth's rotation. To say that a circadian rhythm has been entrained to the light-dark cycle means that the two rhythms have the same period and a stable phase relationship exists. 

	Although these three characteristics make the notion of a circadian rhythm more precise, there is a peculiarity in this description. Namely, as the ambient temperature varies, chemical reactions proceed at different rates. For example, it is standard to try to extend the shelf life of food by storing it in a freezer. This observation makes it hard to imagine that a circadian clock can be temperature-compensated. Nonetheless, as proclaimed by chronobiologists, if this were not the case, then the circadian clock would be good only as a thermometer, not as a timekeeper. 
    
To make matters even more complex, it is incorrect to say that the ambient temperature does not affect circadian clocks. Indeed, one of the most important functions of a circadian clock is to provide an internal estimate of the external local time. One way in which this is accomplished is by identifying the periodic temperature variations associated with dawn and dusk. Thus, the period of a circadian clock is insensitive to temperature fluctuations, but the phase of the clock adapts to temperature variations with a near-circadian period. In fact, even below the critical temperature, near-circadian ambient temperature variations can rescue a circadian rhythm via resonance. With the additional experimental evidence that circadian rhythms continuously decrease in amplitude as a constant ambient temperature approaches the critical minimum, researchers have suggested not only that both cyanobacteria \cite{Murayama2017} and cultured human cells \cite{Xiao2026} exhibit Hopf bifurcations, but that Hopf bifurcation may be a universal feature of circadian rhythms.
    
	A metric that is frequently used for measuring the temperature dependence of biochemical reactions is the $Q_{10}$ quotient. The $Q_{10}$ quotient is defined as
            \begin{align*}
                Q_{10} = \frac{\text{reaction rate at temperature } (T+10)\degree \text{C}}{\text{reaction rate at temperature }T\degree \text{C}}
            \end{align*}
Common values for the $Q_{10}$ quotient are between two and three. However, Table \ref{I_table2} indicates this is not true for the free-runs of circadian rhythms, as the values reported are all close to one. Notice from the definition of the $Q_{10}$ quotient that values close to one indicate that the reaction is relatively insensitive to temperature fluctuations. Therefore, Table \ref{I_table2} supports the assertion that temperature compensation is a characteristic property of circadian rhythms. 
            \begin{table}[h!]
                \centering
                \begin{tabular}{|l|l|l|}
                    \hline
                     \textbf{Species}& \textbf{Phenomenon measured}  & $\mathbf{Q_{10}}$\\
                     \hline
                     \textit{Euglena} (protist) & Phototaxis  &1.01-1.1\\
                     \hline
                     \textit{Gonyaulax} (dinoflagellate) & Bioluminescence  & 0.85\\
                     \hline
                     \textit{Neurospora} (fungus) & Conidiation & 1.03\\
                     \hline
                     \textit{Drosophila} (insect) & Eclosion  &1.1-1.25\\
                     \hline
                     \textit{Lacerta} (lizard)& Locomotor activity & 1.02\\
                     \hline
                     \textit{Myotis} (bat) & Locomotor activity & 1.4\\
                     \hline
                     \textit{Peromyscus} (mouse) & Locomotor activity & 1.1-1.4 \\
                     \hline
                \end{tabular}
                \caption{Values of the $Q_{10}$ quotient for free-runs of circadian rhythms of representative species. Taken from Dunlap et al. \cite[p. 70]{Dunlap2004}. The original source is Sweeney and Hastings \cite{Sweeney1960}.}
                \label{I_table2}
            \end{table}

	Having reviewed the biological evidence for temperature compensation, we conclude by showing how our theory accounts for it, thereby addressing the central question of Section \ref{applications_sec}. Circadian rhythms are usually modeled as limit-cycle oscillators. For example, Ruoff and Rensing \cite{Ruoff1996} use a modified Goodwin oscillator \cite{Goodwin1966} to model temperature-compensated circadian clocks. In our framework, a stable limit cycle arises via the occurrence of nondegenerate Hopf bifurcation. Temperature compensation corresponds to period homeostasis in the terminology of this paper. The theory guarantees a curve of IPH points, which implies period homeostasis on a neighborhood of the curve. Therefore, the framework accounts for both essential features of temperature-compensated circadian clocks: the existence of a stable limit cycle and the robustness of the period to changes in ambient temperature. 

    \subsection{The Kim--Forger SNF Model}\label{applications_model_sec}
        In 2012, Kim and Forger \cite{Kim2012} described a modified version of a mathematical model of the mammalian circadian clock that was proposed by Forger and Peskin \cite{Forger2003}. Since the model reproduces a variety of experimental data, the Kim--Forger model has become prominent in circadian rhythm research. However, the Kim--Forger model is quite large, as it consists of 181 variables and 75 parameters. Consequently, in the same paper, Kim and Forger introduced three simpler mathematical models of the mammalian circadian clock for the purpose of analysis. This subsection shows how one of the three simple models fits into our framework. 

        The model we consider has a single-negative feedback loop (SNF) structure as described in \cite{Kim2012}. The ODEs are
            \begin{align}
                \begin{split}
                    \diff{M}{t} &= \alpha_1 \frac{A_{\text{free}}}{A_T} - \beta_1 M\\
                    \diff{P_c}{t} &= \alpha_2 M - \beta_2 P_c\\
                    \diff{P}{t} &= \alpha_3 P_c - \beta_3P\\
                    A_{\text{free}} &= \frac{1}{2}\lp A_T - P -K_d + \sqrt{(A_T-P-K_d)^2+4K_dA_T}\rp
                \end{split}
                \label{KF_eqn1}
            \end{align}
        In \eqref{KF_eqn1}, the three state variables are the concentrations of PER mRNA ($M$), PER protein in the cytoplasm ($P_c$), and PER protein in the nucleus ($P$). The BMAL1:CLOCK transcription factor is denoted by $A$, $A_T$ is the total concentration of BMAL1:CLOCK in the nucleus, and $A_{\text{free}}$ is the concentration of BMAL1:CLOCK in the nucleus that is not bound to PER:CRY. For a more detailed description of the Kim--Forger SNF model, the reader is referred to \cite{Kim2012}.

	The first step in demonstrating that the Kim--Forger SNF model fits into our framework is showing that a stable limit cycle arises from a nondegenerate Hopf bifurcation. Recently, Yao et al. \cite{Yao2022} and Pei et al. \cite{Pei2024} independently analyzed the bifurcations of the Kim--Forger SNF model. The results in \cite{Pei2024} are more comprehensive and serve as the starting point for our analysis. To increase the odds of an oscillation, it is standard to assume that $\beta_1 = \beta_2 = \beta_3$ in \eqref{KF_eqn1} \cite{Forger2011}. After making this assumption, \eqref{KF_eqn1} can be cast into dimensionless form as
            \begin{align}
                \begin{split}
                    \diff{M}{t} &= \frac{1}{2}\lp 1 - \frac{P}{A} - \frac{K_d}{A} + \sqrt{(1-\frac{P}{A} - \frac{K_d}{A})^2 + \frac{4K_d}{A}}\rp - M\\
                \diff{P_c}{t} &= M-P_c\\
                \diff{P}{t} &= P_c - P
                \end{split}
                \label{KF_eqn2}
            \end{align}
where $A, K_d>0$. Note that \eqref{KF_eqn2} has two free parameters.

We use the same bifurcation parameter as in \cite{Pei2024}, namely $K_d$, and take $A$ to be the input parameter. In light of Theorem \ref{simple_thm}, we search for a Hopf bifurcation curve $\cH$ and an IPH curve $\cS$. The result from \cite{Pei2024} that is crucial to our analysis is the identification of the Hopf bifurcation curve $\cH$. According to Theorem 3.1 in \cite{Pei2024}, \eqref{KF_eqn2} undergoes Hopf bifurcation on the curve
            \begin{align*}
                K_d = \frac{128A^2 + 240A + 49-(7+16A)\sqrt{256A + 49}}{16(1-8A)}
            \end{align*}
The first Liapunov coefficient is negative for $13/1616 < A < 1/8$ and positive for $A <13/1616$. The sign of the first Liapunov coefficient changes at the point $\mathcal{G}$, where
            \begin{align*}
                \mathcal{G}= (K_d, A)= \lp \frac{38491}{11312} - \frac{\sqrt{267338880}\sqrt{1030301}}{15423912}, \frac{13}{1616}\rp 
            \end{align*}
Theorem 3.2 in \cite{Pei2024} states that a generalized Hopf bifurcation occurs at $\mathcal{G}$. The interested reader is referred to Figure 3 in \cite{Pei2024} to see the different qualitative behaviors that \eqref{KF_eqn2} exhibits near this point. 

	The second step in demonstrating that the Kim--Forger SNF model fits into our framework is showing that the robustness of the period is explained by IPH. We proceed numerically. For a fixed value of the bifurcation parameter $K_d$, we vary the input parameter $A$ over an interval on which \eqref{KF_eqn2} admits a periodic solution. For each parameter pair $(K_d, A)$ in this grid, we numerically integrate \eqref{KF_eqn2}, compute the period, and plot the period as a function of $A$. Whenever IPH occurs at some $A$ (for the chosen $K_d$), we record the corresponding $(K_d, A)$. The outcome of this procedure is shown in Figure \ref{KF_fig1}.

            \begin{figure}[h!]
                \centering
                \includegraphics[scale = 0.4]{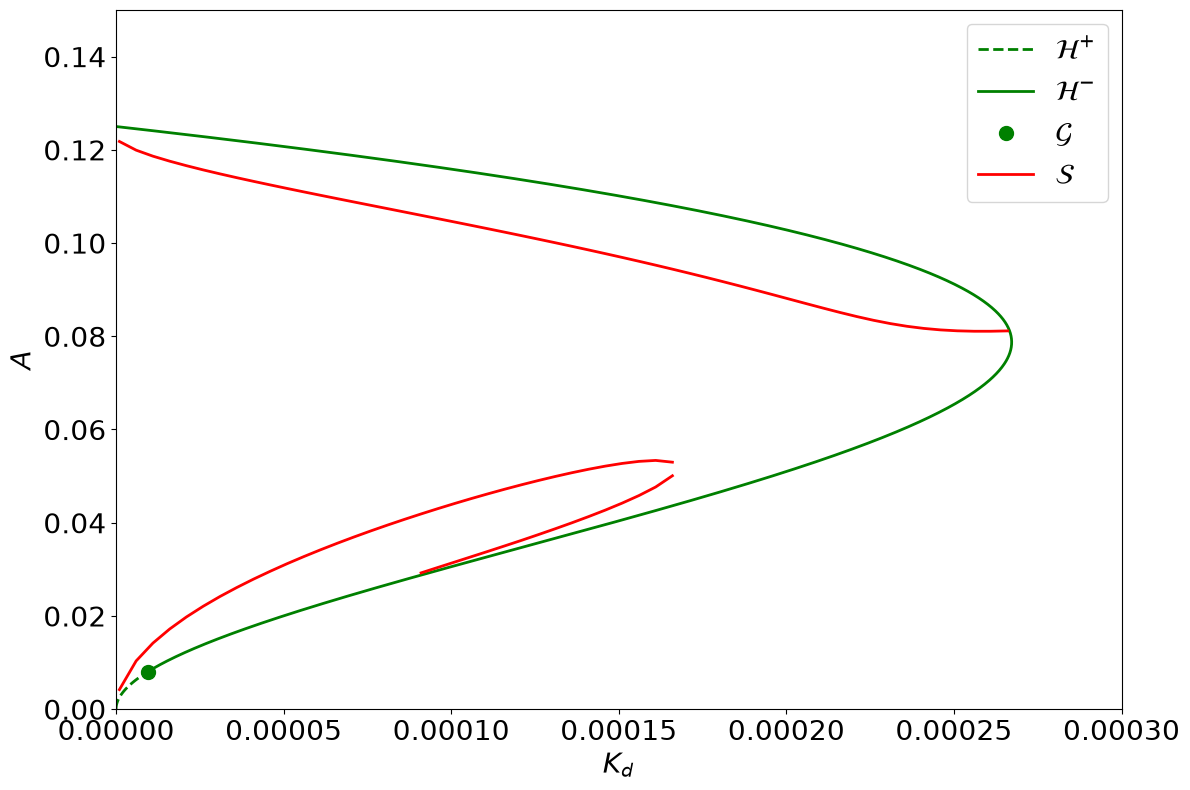}
                \caption{The parameter plane for the dimensionless Kim--Forger SNF model \eqref{KF_eqn2}. The Hopf curve $\cHB$ is shown in green. The first Liapunov coefficient is negative (positive) on the solid (dashed) part of this curve. A generalized Hopf bifurcation occurs where the sign of the first Liapunov coefficient changes. Each red curve $\cPH$ is an SIPH curve.}
                \label{KF_fig1}
            \end{figure}

	The important takeaway from Figure \ref{KF_fig1} is that the Kim--Forger SNF model exhibits IPH. More precisely, the three red curves in Figure \ref{KF_fig1} represent points in the $K_dA$-parameter plane where SIPH occurs. Since IPH implies period homeostasis, this finding supports the assertion that period robustness in the Kim--Forger SNF model is a consequence of IPH. Accordingly, the Kim--Forger SNF model, a well-renowned model of the mammalian circadian clock, fits into our framework and thus provides a concrete answer to the central question of Section \ref{applications_sec}.

	Beyond the aims of Section \ref{applications_sec}, our numerical results also reveal interesting global behavior that is not described by our local theory.  In the local setup, the Hopf and IPH curves are unique. To see this illustrated by the Kim--Forger SNF model, consider the HH graph shown in Figure \ref{kf_hh_graph_fig}. Globally, on the other hand, multiple IPH curves may exist. Multiple IPH curves lead to the possibility of intersections. Figure \ref{KF_fig1} indicates this happens in the Kim--Forger SNF model, as two SIPH curves approach one another as $K_d$ increases to $1.65\cdot 10^{-4}$, then both disappear. 

\begin{figure}[h!]
\centering
\includegraphics[scale = 0.4]{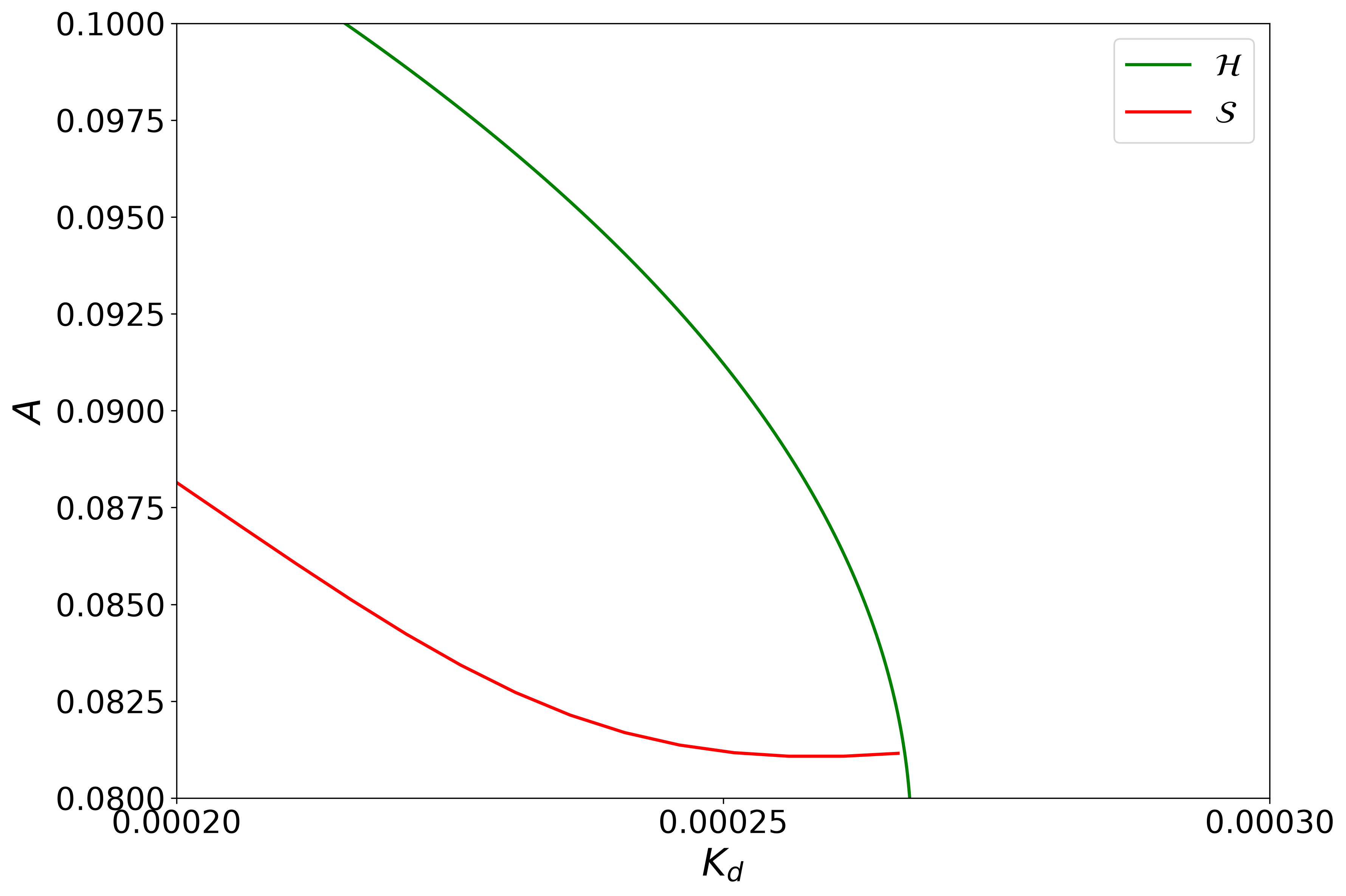}
\caption{\small An HH graph for the Kim--Forger SNF model, obtained by zooming in on an intersection of the Hopf curve $\cH$ and an IPH curve $\cS$ in Figure \ref{KF_fig1}. Here, $\cH_{-}$ is the periodic regime, and the period input-output function decreases in $\cS_{-}$ and increases in $\cS_{+}$. Thus, this graph corresponds to the case $\sigma_{\cI}(0) < 0$ and $T_{\cI \cI}(0) < 0$ in Theorem \ref{classification_thm}.}
\label{kf_hh_graph_fig}
\end{figure}

Figure \ref{KF_fig2} offers another way to view this phenomenon. For $K_d < 1.65 \cdot 10^{-4}$, there are two SIPH points lying to the left of $A\approx 0.06$. As $K_d$ increases toward $1.65 \cdot 10^{-4}$, these two SIPH points approach one another and eventually coalesce, producing a broad plateau-like region in the graph of the period input-output function. For $K_d > 1.65 \cdot 10^{-4}$, the period input-output function is strictly decreasing in this region; hence, there is no longer IPH near this value of $A$. This geometry characterizes the chair singularity as an organizing center for the period input-output function: as an additional parameter is varied, the function changes from having two nearby SIPH points to being strictly monotone. Consequently, chair singularities provide a mechanism by which a broad plateau-like region can appear or disappear under variation of a parameter. Although this behavior lies beyond the theory developed in this paper, Figure \ref{KF_fig2} motivates the study of chair infinitesimal period homeostasis (CIPH) near Hopf bifurcation.
        
\begin{figure}[h!] 
                \begin{subfigure}{0.48\textwidth}
                \includegraphics[width=\linewidth]{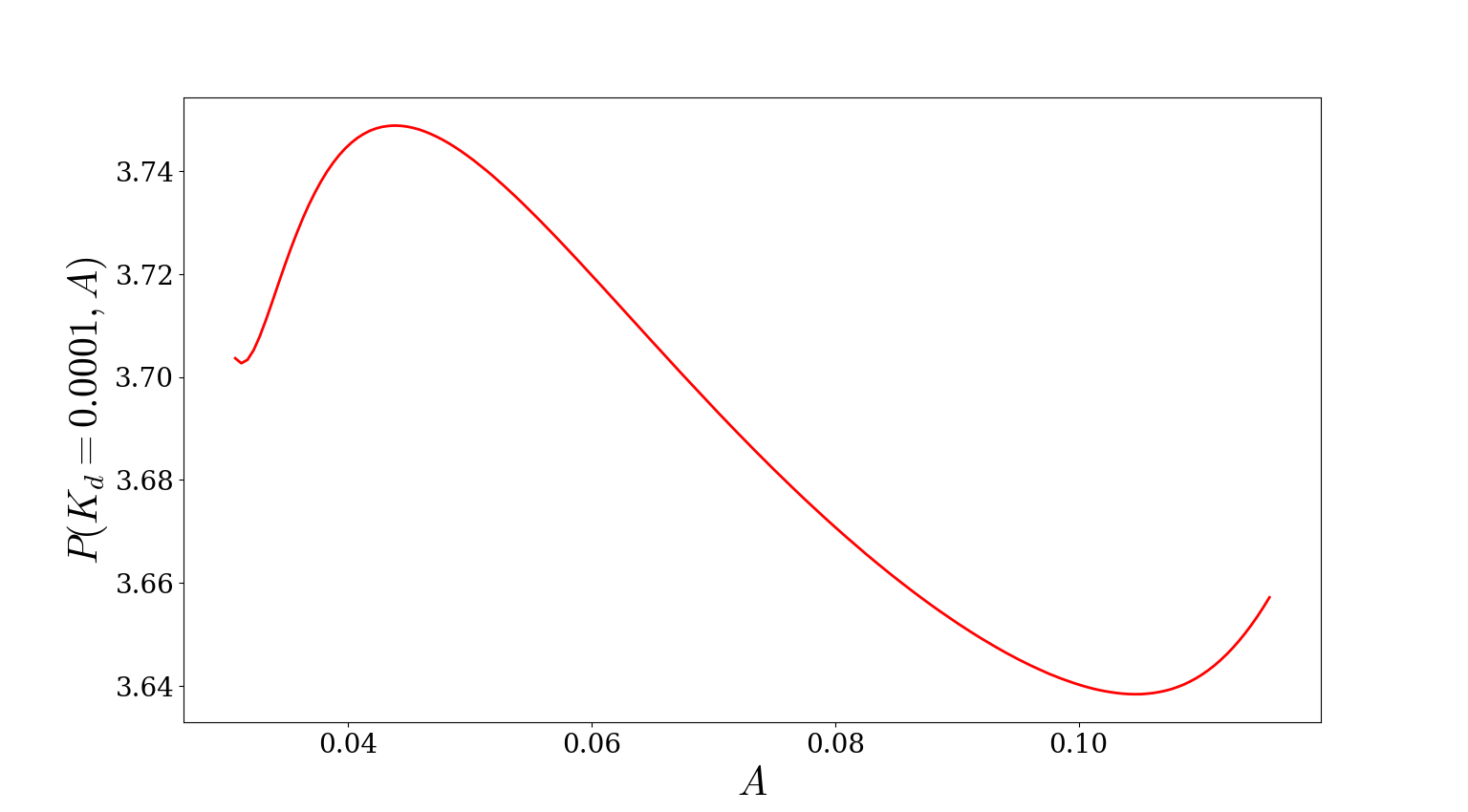}
                \caption{} \label{KF_fig2_subA}
                \end{subfigure}\hspace*{\fill}
                \begin{subfigure}{0.48\textwidth}
                \includegraphics[width=\linewidth]{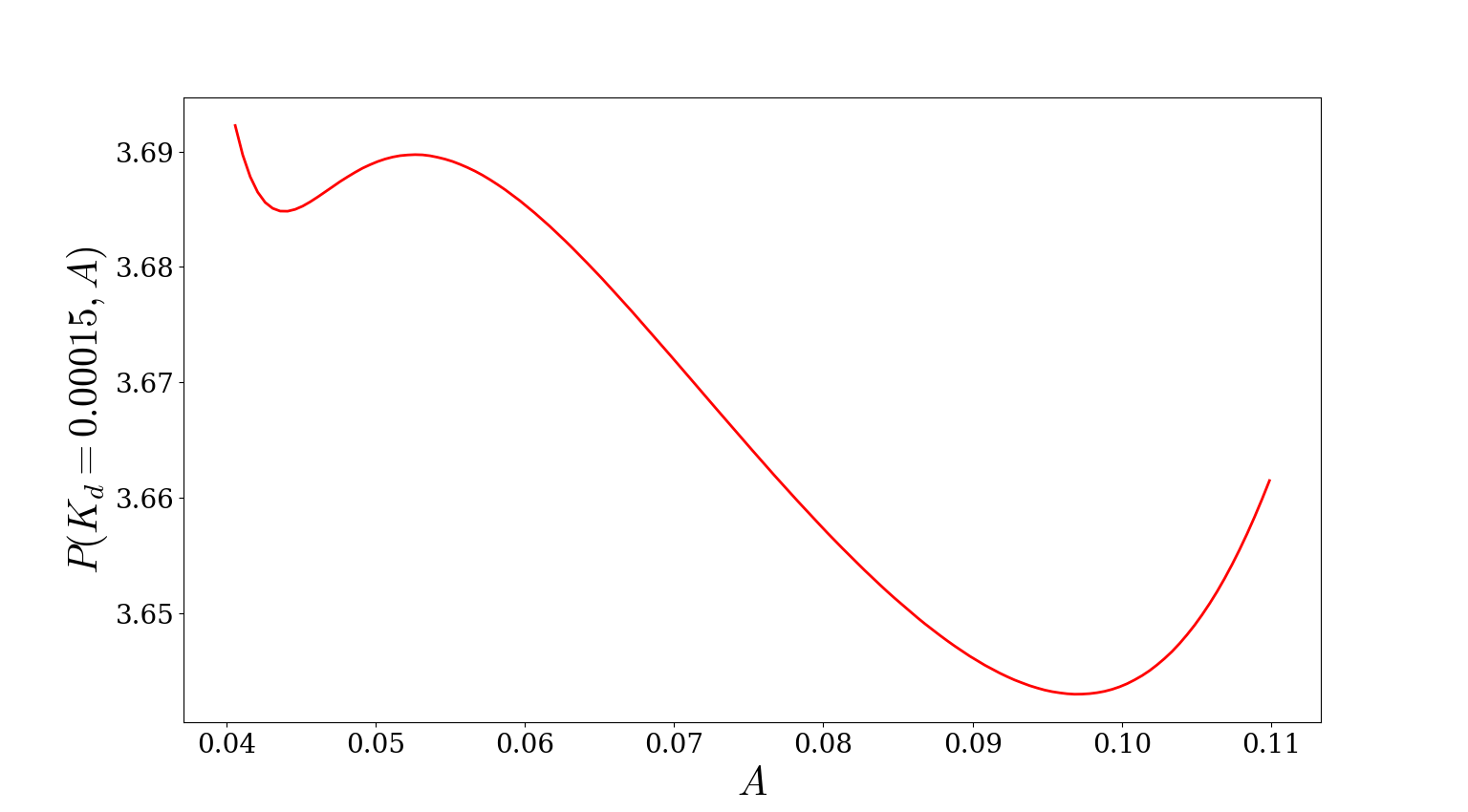}
                \caption{} \label{KF_fig2_subB}
                \end{subfigure}
                
                \medskip
                \begin{subfigure}{0.48\textwidth}
                \includegraphics[width=\linewidth]{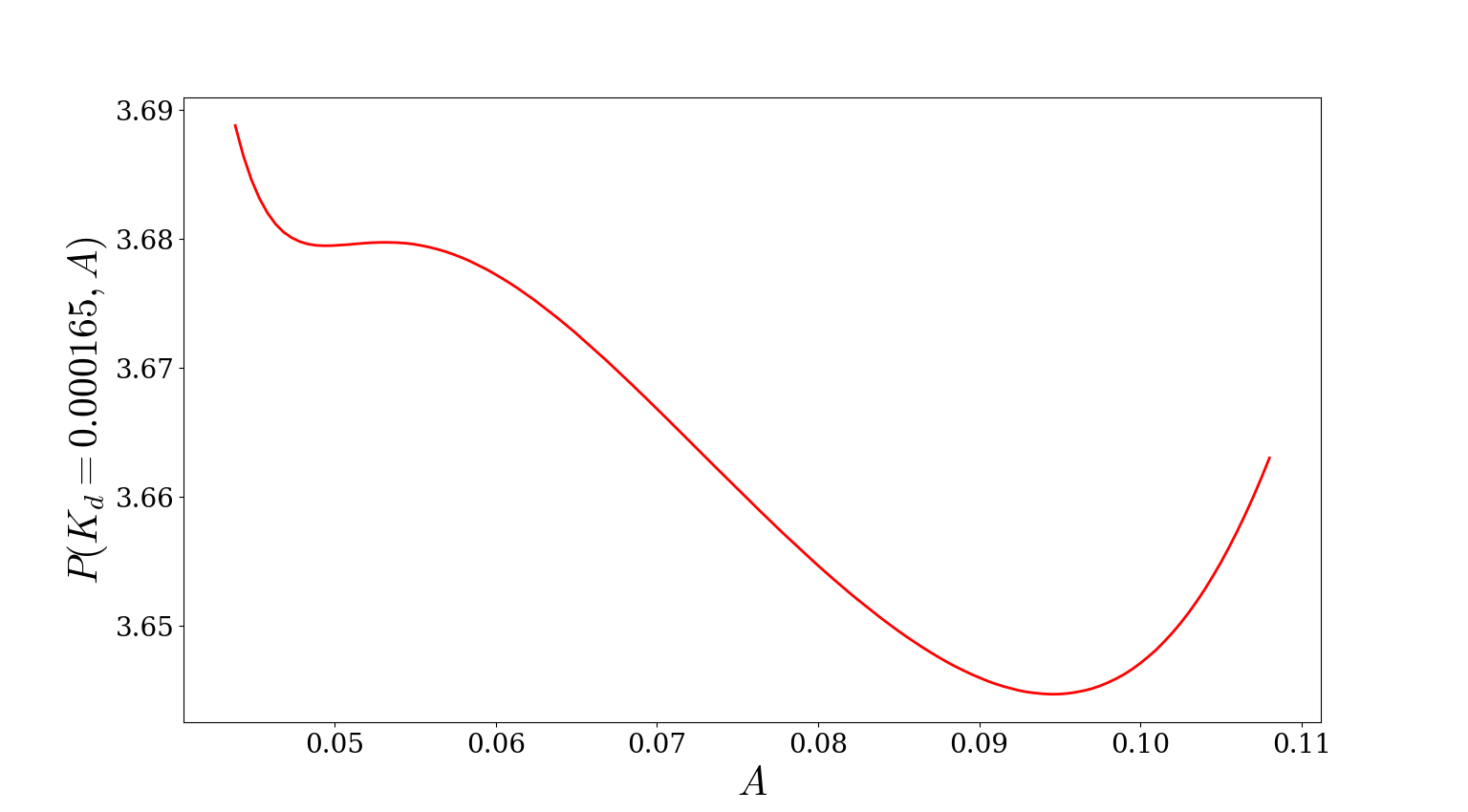}
                \caption{} \label{KF_fig2_subC}
                \end{subfigure}\hspace*{\fill}
                \begin{subfigure}{0.48\textwidth}
                \includegraphics[width=\linewidth]{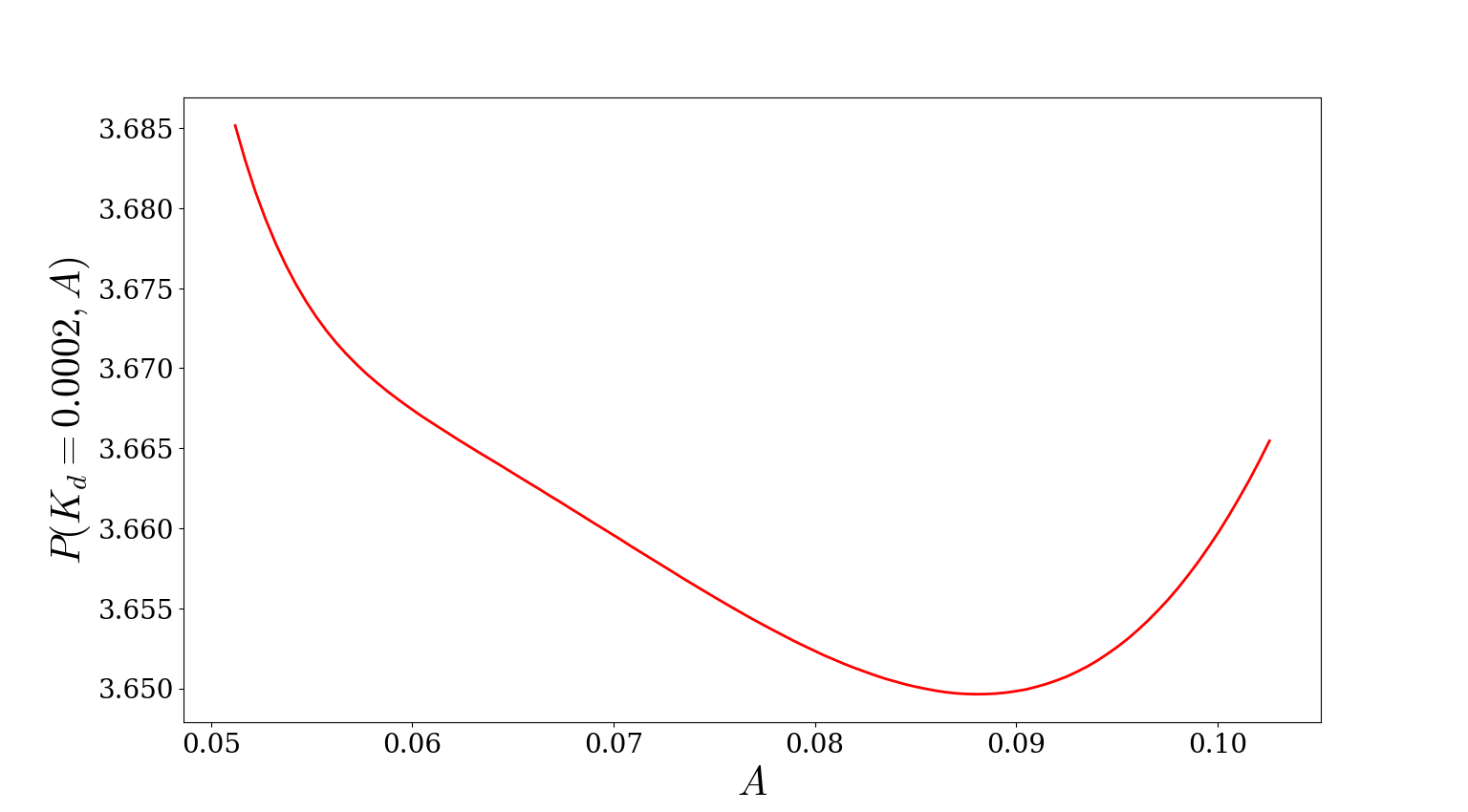}
                \caption{} \label{KF_fig2_subD}
                \end{subfigure}
                \caption{\small Plots of the period input-output function as a function of the input parameter $A$ as the bifurcation parameter $K_d$ is increased from $1\cdot 10^{-4}$ to $2\cdot 10^{-4}$. In Figures \ref{KF_fig2_subA} and \ref{KF_fig2_subB}, there are two SIPH points to the left of $A\approx 0.06$. In Figure \ref{KF_fig2_subC}, these two SIPH points have coalesced to produce a higher-order form of IPH that is suggestive of a chair-type singularity \cite{Golubitsky2017}. Figure \ref{KF_fig2_subD} shows that after further increasing the value of $K_d$, there is no longer IPH near this value of $A$.} \label{KF_fig2}
\end{figure}

\section{Results} \label{results_sec}

Our main result, Theorem \ref{simple_thm}, was described in Section \ref{intro_simultaneous_hopf_iph}. We now present a proof of this result and its consequences. Section \ref{liapunov_schmidt_sec} begins by using Liapunov--Schmidt reduction to derive a smooth period input-output function. This derivation also yields a formula for the period, and hence a way to find IPH points. Section \ref{siph_sec} then proves Theorem \ref{simple_thm} and derives consequences that qualitatively characterize solutions of \eqref{homeo_defn_eqn}. Theorem \ref{classification_thm} follows immediately from these consequences. Finally, Section \ref{taylor_coeff_sec} computes the relevant Taylor coefficients of $T$ in terms of those of $p$ and $q$ from the Liapunov--Schmidt normal form. These calculations help determine whether a given model satisfies the SIPH hypotheses (Definition \ref{siph_assumption_defn}).

\subsection{Derivation of the Period Input-Output Function}\label{liapunov_schmidt_sec}

Recall from Section \ref{intro_hopf} the two results we are using from bifurcation theory: reducing to an $S^1$-equivariant map on $\R^2$ via the Liapunov--Schmidt procedure and the Hopf bifurcation theorem. A proof of the former can be found in Chapter \RN{8} of \cite{Golubitsky1985}. Although the Hopf bifurcation theorem is a classic result, the formula for the period is often passed over, but it is essential for our purposes. The goal of this section is therefore to prove Theorem \ref{hopf_thm}, with particular emphasis on deriving \eqref{period_eqn}.

\begin{proof}
Since the simple eigenvalue condition \ref{H1} is satisfied, we can use Theorem \ref{reduced_eqn_thm} to reduce to Liapunov--Schmidt normal form. After performing this reduction, nontrivial small-amplitude periodic solutions of \eqref{homeo_defn_eqn} with period $\frac{2\pi}{1 + \tau}$ correspond to simultaneous zeros of $p$ and $q$ with $x^2 + y^2 > 0$. Furthermore, the functions $p$ and $q$ are invariant under rotations, so we can assume without loss of generality that $y=0$ and $x\geq0$. Writing $z=x^2$, the system that must be solved is
\begin{align*}
p(z, \lambda, \cI, \tau) = 0\qquad \text{and}\qquad q(z, \lambda, \cI, \tau) = 0
\end{align*}
We now show the Hopf hypotheses (Definition \ref{hopf_assumption_defn}) imply that solutions of this system near the origin are uniquely determined by $(\lambda, \cI)$. Starting with the $q$ equation,  recall from \eqref{p_q_eqns} that $q(0)=0$ and $q_\tau(0)\neq 0$. Therefore, it follows from the implicit function theorem that there exists a unique smooth function $\tilde{T}(z, \lambda, \cI)$, defined on a neighborhood of $(z, \lambda, \cI) = (0,0,0)$, such that
\begin{align*}
\tilde{T}(0) = 0\qquad \text{and}\qquad q(z, \lambda, \cI, \tilde{T}(z, \lambda, \cI))\equiv 0
\end{align*}
Now, observe that differentiating the $p$ equation with respect to $z$ and evaluating at $(z, \lambda, \cI) = (0,0, 0)$ yields
\begin{align*}
p_{z}(0) + p_{\tau}(0)\tilde{T}_{z}(0) = p_{z}(0)\neq 0
\end{align*}
The equality above holds due to \eqref{p_q_eqns}, while the inequality is precisely the cubic condition \ref{H3}. Furthermore, $p(0) = 0$ by \eqref{p_q_eqns}. Hence, another application of the implicit function theorem gives the existence of a unique smooth function $\tilde{Z}(\lambda, \cI)$, defined on a neighborhood of $(\lambda, \cI) = (0, 0)$, such that
\begin{align*}
\tilde{Z}(0) &= 0\qquad p(\tilde{Z}(\lambda, \cI), \lambda, \cI, \tilde{T}(\tilde{Z}(\lambda, \cI), \lambda, \cI))\equiv 0 \qquad q(\tilde{Z}(\lambda, \cI), \lambda, \cI, \tilde{T}(\tilde{Z}(\lambda, \cI), \lambda, \cI))\equiv 0
\end{align*}
It follows from this argument that the period parameter $\tau$ is uniquely determined by the smooth function
\begin{align*}
T(\lambda, \cI) = \tilde{T}(\tilde{Z}(\lambda, \cI),\lambda, \cI)
\end{align*}
which completes the proof of \eqref{period_eqn}. 

What remains to be accounted for is uniqueness and stability of the periodic orbit of \eqref{homeo_defn_eqn}. The proofs of these facts are not central to our analysis, so we refer the interested reader to Theorem 3.2 (p. 352) and Theorem 4.1 (p. 360) in \cite{Golubitsky1985}.
\end{proof}


\subsection{Theory of Simultaneous Hopf Bifurcation and Period Homeostasis}\label{siph_sec}

Section \ref{liapunov_schmidt_sec} showed that, under the Hopf hypotheses (Definition \ref{hopf_assumption_defn}), there exists a well-defined period input-output function. We now use this fact to prove Theorem \ref{simple_thm}.

\begin{proof}
We first show that there exists a unique smooth nondegenerate Hopf curve $\cH = \{(\lambda, \cI): \cI = \cI_{\hb}(\lambda)\}$ such that $\cI_{\hb}(0) = 0$. To that end, recall from the simple eigenvalue condition \ref{H1} that all eigenvalues of $(\mathrm{d}F)_{0, 0, 0}$ other than $\pm i$ have strictly negative real part. Since the eigenvalues of $(\mathrm{d}F)_{0, \lambda, \cI}$ depend smoothly on $(\lambda, \cI)$, the noncritical eigenvalues continue to have strictly negative real part on a neighborhood of the origin. By a similar argument, the Hopf crossing \ref{H2} and cubic \ref{H3} conditions hold on a neighborhood of the origin. Therefore, nondegenerate Hopf bifurcation occurs at solutions of the equation
\begin{align*}
\sigma(\lambda, \cI) = 0
\end{align*}
Since $\sigma(0)=0$ and $\sigma_{\cI}(0)\neq 0$, it follows from the implicit function theorem that, locally, this equation uniquely determines $\cI$ as a smooth function of $\lambda$. Let $\cI = \cI_{\hb}(\lambda)$ be such a function. Then
\begin{align*}
\cI_{\hb}(0) = 0 \qquad \text{and} \qquad \sigma(\lambda, \cI_{\hb}(\lambda)) \equiv 0
\end{align*}
which concludes the proof for the Hopf curve.

Next, we show that there exists a unique smooth IPH curve 
\begin{align*}
\cS = \{(\lambda, \cI): \text{ a periodic solution exists and } \cI = \cI_{\ph}(\lambda)\}
\end{align*}
 such that $\cI_{\ph}(0) = 0$ and $P_{\cI \cI}(\lambda, \cI_{\ph}(\lambda))\neq 0$. To see this is true, recall from the curve condition \ref{P2} that $T_{\cI}(0)=0$ and $T_{\cI \cI}(0)\neq 0$. Therefore, the implicit function theorem asserts the existence of a unique smooth function $\cI = \cI_{\ph}(\lambda)$, defined on a neighborhood of $\lambda = 0$, such that
\begin{align*}
\cI_{\ph}(0) = 0 \qquad \text{and}\qquad T_{\cI}(\lambda, \cI_{\ph}(\lambda))\equiv 0
\end{align*}
Furthermore, $T_{\cI \cI}(\lambda, \cI_{\ph}(\lambda))\neq 0$ holds by continuity. To complete the proof for the IPH curve, observe that \eqref{period_eqn} implies
\begin{align*}
     P_{\cI}(\lambda_0, \cI_0) = 0 \qquad \text{and} \qquad P_{\cI \cI}(\lambda_0, \cI_0) \neq 0
\end{align*}
if and only if
\begin{align*}
     T_{\cI}(\lambda_0,\cI_0) = 0 \qquad \text{and}\qquad T_{\cI \cI}(\lambda_0, \cI_0) \neq 0
\end{align*}
In other words, the defining and nondegeneracy conditions for SIPH can equivalently be stated in terms of either $T$ or the period input-output function. Hence, the function $\cI = \cI_{\ph}(\lambda)$ satisfies all of the desired properties, so this part of the proof is complete. 

Finally, we prove that the intersection of $\cH$ and $\cS$ at the origin is transverse. In light of the description of $\cH$ and $\cS$ as the graphs of $\cI = \cI_{\hb}(\lambda)$ and $\cI = \cI_{\ph}(\lambda)$, what we must show is that
\begin{align*}
\cI_{\hb}'(0)\neq \cI_{\ph}'(0)
\end{align*}
We verify this inequality by computing the left and right-hand sides, then recognizing it is equivalent to the transversality condition \ref{P3}. Starting with the left-hand side, differentiating the identity $\sigma \equiv 0$ with respect to $\lambda$ and evaluating at $\lambda = 0$ yields 
\begin{align*}
\cI_{\hb}'(0) = -\lp \frac{\sigma_{\lambda}}{\sigma_{\cI}}\rp (0)
\end{align*}
Similarly, it follows from the identity $T_{\cI} \equiv 0$ that
\begin{align*}
\cI_{\ph}'(0) = -\lp \frac{T_{\cI \lambda}}{T_{\cI \cI}}\rp(0)
\end{align*}
Therefore, $\cI_{\hb}'(0)\neq \cI_{\ph}'(0)$ if and only if $(\sigma_{\lambda}T_{\cI \cI} - \sigma_{\cI}T_{\cI \lambda})(0)\neq 0$, which completes the proof of Theorem \ref{simple_thm}.
\end{proof}

Theorem \ref{simple_thm} guarantees the existence of an HH graph; however, in Section \ref{intro_simultaneous_hopf_iph} we pointed out that the significance of an HH graph is that it characterizes the qualitative behavior of solutions of \eqref{homeo_defn_eqn}. The rest of this section is devoted to deriving consequences of Theorem \ref{simple_thm} that demonstrate this is true. 

We begin by showing that, as in Figure \ref{siph_example_subfigA}, the Hopf curve $\cH$ divides a neighborhood of the origin in the $\lambda \cI$-plane into steady-state and periodic regimes. To set up some notation used throughout our analysis, recall from Section \ref{intro_simultaneous_hopf_iph} that
\begin{align}
\cH^c &= \{(\lambda, \cI): (\lambda, \cI) \not \in \cH\}\nonumber\\
&= \{(\lambda, \cI): \cI < \cI_{\hb}(\lambda)\} \dcup \{(\lambda,\cI): \cI > \cI_{\hb}(\lambda)\}\nonumber\\
&=\cH_{-} \dcup \cH_{+} \label{u_i_tilde_defn}
\end{align}
Furthermore, since the function $\eta(\lambda, \cI) = \cI - \cI_{\hb}(\lambda)$ is continuous, it follows from the fact that $\cH_{-} = \eta^{-1}(-\infty, 0)$ and $\cH_{+} = \eta^{-1}(0, \infty)$ that $\cH_{-}$ and $\cH_{+}$ are open sets. 

The first consequence we derive from Theorem \ref{simple_thm} is that the sets defined by \eqref{u_i_tilde_defn} do indeed correspond to steady-state and periodic regimes. As we now show, the role played by each set is determined by the sign of $\sigma_{\cI}(0)$. 

\begin{corollary}\label{sigma_i_cor}
Suppose that \eqref{homeo_defn_eqn} satisfies the Hopf (Definition \ref{hopf_assumption_defn}) and SIPH (Definition \ref{siph_assumption_defn}) hypotheses. Then, if $\sigma_{\cI}(0) > 0$, $\cH_{-}$ and $\cH_{+}$ correspond to the steady-state and periodic regimes, respectively. Conversely, if $\sigma_{\cI}(0) <0$, then the roles of $\cH_{-}$ and $\cH_{+}$ are reversed.
\end{corollary}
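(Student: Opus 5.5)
The plan is to read off the sign of $\sigma$ on each side of the Hopf curve, and then use the standard Hopf picture to say which side is steady-state and which is periodic. By Theorem \ref{simple_thm}\ref{S1}, $\sigma(\lambda,\cI_{\hb}(\lambda))\equiv 0$. Fix $\lambda$ near $0$. By the mean value theorem in $\cI$,
\[
\sigma(\lambda,\cI)=\sigma_{\cI}(\lambda,\xi)\,\bigl(\cI-\cI_{\hb}(\lambda)\bigr)
\]
for some $\xi$ between $\cI_{\hb}(\lambda)$ and $\cI$. Shrink the neighborhood so that $\sigma_{\cI}$ has the sign of $\sigma_{\cI}(0)$ throughout; this is possible by \ref{P1} and continuity. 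Then $\operatorname{sgn}\sigma(\lambda,\cI)=\operatorname{sgn}\sigma_{\cI}(0)\cdot\operatorname{sgn}\eta(\lambda,\cI)$, where $\eta=\cI-\cI_{\hb}(\lambda)$. So if $\sigma_{\cI}(0)>0$, then $\sigma<0$ on $\cH_{-}$ and $\sigma>0$ on $\cH_{+}$; if $\sigma_{\cI}(0)<0$, these signs are reversed.

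Next I connect the sign of $\sigma$ to the regimes. On a small enough neighborhood, \ref{H1} and continuity of eigenvalues ensure that all noncritical eigenvalues of $(\mathrm{d}F)_{0,\lambda,\cI}$ have negative real part. Hence $u\equiv0$ is linearly stable exactly where $\sigma<0$ and unstable where $\sigma>0$.

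It remains to show that small-amplitude periodic orbits exist exactly where $\sigma>0$. I use the proof of Theorem \ref{hopf_thm}: periodic solutions correspond to zeros $z=\tilde Z(\lambda,\cI)$ with $z>0$, so I must show that $\operatorname{sgn}\tilde Z=\operatorname{sgn}\sigma$ near the origin. This is the main obstacle, because the paper has not yet related $p$ to $\sigma$. I will use the standard fact from \cite[Ch.~VIII]{Golubitsky1985} (the same computation cited in Table \ref{iph_procedure_table} via Proposition 3.3) that $p(0,\lambda,\cI,T_0(\lambda,\cI))$ is a positive multiple of $\sigma(\lambda,\cI)$. Here $T_0$ is the $\tau$-solution of $q=0$ at $z=0$, which ties the reduced equations to the linearization along $u\equiv0$. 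In particular, $p(0,\lambda,\cI,\tilde T(0,\lambda,\cI))=0$ exactly on $\cH$, and its $\cI$-derivative at $0$ is $c\,\sigma_{\cI}(0)$ with $c>0$.

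Define $g(z,\lambda,\cI)=p(z,\lambda,\cI,\tilde T(z,\lambda,\cI))$, and recall from the proof of Theorem \ref{hopf_thm} that $g_z(0)=p_z(0)<0$ by \ref{H3}. Writing $g(z)=g(0)+z\,\tilde g$ with $\tilde g<0$ nearby, the unique small zero satisfies
\[
\tilde Z=-\frac{g(0,\lambda,\cI)}{\tilde g},
\]
which has the sign of $g(0,\lambda,\cI)$, and hence the sign of $\sigma$. Therefore a small periodic orbit, which is stable by Theorem \ref{hopf_thm}, exists exactly on the side where $\sigma>0$, and no nontrivial small orbit exists where $\sigma<0$.

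Combining this with the first paragraph: if $\sigma_{\cI}(0)>0$, then $\cH_{-}$ is the steady-state regime and $\cH_{+}$ is the periodic regime, and the roles reverse when $\sigma_{\cI}(0)<0$.
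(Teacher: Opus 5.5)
Your proposal is correct. Its first two steps are the paper's argument. Your mean-value step is the paper's observation that $\sigma$ is monotone in $\cI$ near the origin, which fixes the sign of $\sigma$ on $\cH_{\pm}$. Then \ref{H1} and continuity of the noncritical eigenvalues turn that sign into stability or instability of $u\equiv 0$.

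The two proofs differ in how the periodic regime is identified. The paper stops once it knows $u\equiv 0$ is unstable on one side and appeals to exchange of stability, i.e.\ the clause of Theorem~\ref{hopf_thm} placing the stable orbit on the unstable side. It never actually rules out small orbits on the stable side, although it announces that the steady state is ``the only solution'' there. You instead read everything off the reduced equations via $\sgn \tilde Z = \sgn g(0,\lambda,\cI) = \sgn \sigma$. This gives existence on one side and nonexistence on the other at once, and makes the role of \ref{H3} explicit.

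The cost is your identification of $g(0,\lambda,\cI)=p(0,\lambda,\cI,\tilde T(0,\lambda,\cI))$ with a positive multiple of $\sigma$. That is more than the first-derivative content of Proposition~3.3 in \cite{Golubitsky1985}. Only sign agreement is needed, and it can be justified directly:
\begin{itemize}
\item The linearization of \eqref{reduced_eqn} at $x=y=0$ has determinant $p^2+q^2$ evaluated at $z=0$.
\item It is singular exactly when $(1+\tau)\frac{d}{ds}-(\mathrm{d}F)_{0,\lambda,\cI}$ has a nontrivial $2\pi$-periodic kernel, which near the origin happens only when $\sigma=0$ and $1+\tau=\omega$.
\item Hence $g(0,\cdot,\cdot)$ vanishes exactly on $\cH$.
\item Since $p_\tau(0)=0$, we have $g_{\cI}(0)=p_{\cI}(0)$, which has the sign of $\sigma_{\cI}(0)$ by Proposition~3.3 of \cite{Golubitsky1985} with $\lambda$ replaced by $\cI$.
\item So $g(0,\cdot,\cdot)$ and $\sigma$ agree in sign on each side of $\cH$.
\end{itemize}
This sign relation is also what makes $p_z(0)<0$ correspond to a stable, supercritical orbit in the paper's conventions. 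Your argument is therefore an unpacked version of the exchange of stability that the paper takes as given.
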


\begin{proof}
Suppose $\sigma_{\cI}(0)>0$. We will show that when $(\lambda, \cI) \in \cH_{-}$ the only solution of \eqref{homeo_defn_eqn} is a stable steady state, while there is an unstable steady state and stable periodic orbit when $(\lambda, \cI)\in \cH_{+}$. The argument for the case where $\sigma_{\cI}(0)<0$ is analogous, so this will complete the proof.

Let $(\lambda_1, \cI_1)\in \cH_{-}$. By definition of $\cH_{-}$, $\cI_1 < \cI_{\hb}(\lambda_1)$. Since $\sigma$ is an increasing function of $\cI$ near the origin, it follows that
\begin{align*}
\sigma(\lambda_1, \cI_1) < \sigma(\lambda_1, \cI_{\hb}(\lambda_1)) = 0
\end{align*}
Hence, the real part of the eigenvalues $\sigma \pm i \omega$ is strictly negative at $(\lambda_1, \cI_1)$. All other eigenvalues of $(\mathrm{d}F)_{0, \lambda_1, \cI_1}$ also have strictly negative real part, so it follows from linear stability analysis that the steady-state solution $u\equiv 0$ is stable.

Now, let $(\lambda_2, \cI_2)\in \cH_{+}$. Since there is an exchange of stability in nondegenerate Hopf bifurcation, it suffices to show that the steady-state solution $u\equiv 0$ is unstable. To see this is true, recall that $(\lambda_2, \cI_2) \in \cH_{+}$ if and only if $\cI_2 > \cI_{\hb}(\lambda_2)$. Using the fact that $\sigma$ is increasing with respect to $\cI$ near the origin, this implies
\begin{align*}
\sigma(\lambda_2, \cI_2) > \sigma(\lambda_2, \cI_{\hb}(\lambda_2))= 0
\end{align*}
Therefore, it follows from linear stability analysis that $u\equiv 0$ is unstable at $(\lambda_2, \cI_2)$, which completes the proof. 
\end{proof}

Recall that in Figure \ref{siph_example_subfigA} the IPH curve $\cS$ further divides the periodic regime according to the monotonicity of the period input-output function with respect to $\cI$. Given the sign of $\sigma_{\cI}(0)$, we now know from Corollary \ref{sigma_i_cor} which of the sets $\cH_{-}$, $\cH_{+}$ possesses a periodic orbit. Therefore, we can prove that the partition into three regions shown in Figure \ref{siph_example_subfigA} holds for general systems.

\begin{corollary}\label{hh_partition_cor}
Suppose that \eqref{homeo_defn_eqn} satisfies the Hopf (Definition \ref{hopf_assumption_defn}) and SIPH (Definition \ref{siph_assumption_defn}) hypotheses. Then the HH graph partitions a neighborhood of the origin in the $\lambda \cI$-plane into three open sets.
\end{corollary}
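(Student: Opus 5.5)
Fix a neighborhood $U$ of the origin, chosen to be a small open box $(-\delta,\delta)\times(-\epsilon,\epsilon)$, on which Theorem \ref{simple_thm} and Corollary \ref{sigma_i_cor} hold and on which $\cI_{\hb}$ and $\cI_{\ph}$ are defined with graphs lying inside the box. We may take $\epsilon$ larger than $\sup|\cI_{\hb}|$ and $\sup|\cI_{\ph}|$ on $(-\delta,\delta)$ by shrinking $\delta$. We treat the case $\sigma_{\cI}(0)>0$ in detail; the case $\sigma_{\cI}(0)<0$ follows by interchanging $\cH_{-}$ and $\cH_{+}$. By Corollary \ref{sigma_i_cor}, in this case the periodic regime is $\cH_{+}$, and $\cS$ is by definition contained in the closure of the periodic regime. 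The three candidate open sets are
\[
R_1=\cH_{-}\cap U,\qquad R_2=\{(\lambda,\cI)\in \cH_{+}\cap U:\ \cI<\cI_{\ph}(\lambda)\},\qquad R_3=\{(\lambda,\cI)\in \cH_{+}\cap U:\ \cI>\cI_{\ph}(\lambda)\}.
\]
In words, these are the steady-state regime and the two pieces of the periodic regime cut out by $\cS$.

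Openness follows exactly as for $\cH_{\pm}$. The functions $\eta(\lambda,\cI)=\cI-\cI_{\hb}(\lambda)$ and $\xi(\lambda,\cI)=\cI-\cI_{\ph}(\lambda)$ are continuous, so each $R_j$ is a finite intersection of preimages of open intervals under these maps with the open set $U$. Disjointness is immediate from the strict inequalities. To see that $R_1\cup R_2\cup R_3$ together with the HH graph $\cH\cup\cS$ exhausts $U$, take any point of $U$. Either $\eta<0$, $\eta=0$, or $\eta>0$. In the last case, either $\xi<0$, $\xi=0$, or $\xi>0$. The case $\eta>0,\ \xi=0$ is a point of $\cS$, because a periodic solution exists there.

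The one genuine point to check is that $\cS$ actually lies in the periodic regime away from the origin. Otherwise, the part of the graph of $\cI_{\ph}$ lying in $\cH_{-}$ would not belong to $\cS$, and the "three sets" count could fail. This is the main obstacle, and it is handled by transversality \ref{S3}. Because $\cI_{\hb}'(0)\neq\cI_{\ph}'(0)$, the difference $\cI_{\ph}(\lambda)-\cI_{\hb}(\lambda)$ has a nonzero derivative at $0$. After shrinking $\delta$, it therefore has a strict sign on $(-\delta,0)$ and the opposite sign on $(0,\delta)$. Hence the graph of $\cI_{\ph}$ lies in $\cH_{+}$ for exactly one sign of $\lambda$, and $\cS$ is that half-branch together with the origin.

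Consequently $R_2$ and $R_3$ are separated by $\cS$ only on one side of $\lambda=0$. On the other side, the graph of $\cI_{\ph}$ runs through $R_1$, so $\xi$ keeps a fixed sign throughout $\cH_{+}$ for those $\lambda$. I would then verify that both $R_2$ and $R_3$ are nonempty; points just above $\cH$ and just above $\cS$ on the appropriate side of $\lambda=0$ will do. It also follows that $R_1,R_2,R_3$ are the connected components of $U\setminus(\cH\cup\cS)$, since each is a region between graphs of continuous functions over an interval. This yields the claimed partition into three open sets.
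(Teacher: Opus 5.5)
Your proof is correct and is essentially the paper's argument: your $R_1,R_2,R_3$ are exactly the paper's $\cH_{-}$, $U_3$, $U_4$ in the case $\sigma_{\cI}(0)>0$, and openness comes from the continuity of $\cI-\cI_{\hb}(\lambda)$ and $\cI-\cI_{\ph}(\lambda)$. Your extra step uses transversality to show that $\cI_{\ph}-\cI_{\hb}$ changes sign at $\lambda=0$, so the graph of $\cI_{\ph}$ enters the periodic regime on exactly one side; this is a worthwhile refinement the paper leaves implicit, since it only notes that the intersection is isolated and never checks that $U_3,U_4$ are nonempty, but phrase it as securing that $\cS$ is nonempty away from the origin, because $\cS$ lies in the periodic regime by definition.
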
 

\begin{proof}
According to \eqref{u_i_tilde_defn}, the Hopf curve $\cH$ splits the neighborhood into two open sets. What remains to be shown is that the IPH curve $\cS$ splits precisely one of $\cH_{-}$, $\cH_{+}$ into two open sets. While showing this is true, we use the fact that a transverse intersection of two curves in a plane is isolated \cite{Lee2013}, so $\cI_{\hb}(\lambda) \neq \cI_{\ph}(\lambda)$ for all $\lambda \neq 0$ in a neighborhood of the origin. 

Let $\Gamma = \{(\lambda, \cI): \cI = \cI_{\ph}(\lambda)\}$ be the graph of the function $\cI_{\ph}(\lambda)$. Although $\Gamma$ and $\cS$ are closely related, they are not the same. Rather, $\cS$ is the part of $\Gamma$ that lies in the periodic regime. With this point in mind, we first consider how $\cH$ and $\Gamma$ partition a neighborhood of the origin in the $\lambda\cI$-plane.
\begin{align}
\Gamma^c \cap \cH_{-} &= \{(\lambda, \cI)\in \cH_{-}: \cI < \cI_{\ph}(\lambda)\} \dcup \{(\lambda, \cI)\in \cH_{-}: \cI > \cI_{\ph}(\lambda)\}\nonumber\\
&= U_1\dcup U_2 \label{u_12_defn} \\
\Gamma^c\cap \cH_{+} &= \{(\lambda, \cI)\in \cH_{+}: \cI < \cI_{\ph}(\lambda)\}\dcup \{(\lambda, \cI)\in \cH_{+}:\cI > \cI_{\ph}(\lambda)\}\nonumber\\
&=U_3\dcup U_4 \label{u_34_defn}
\end{align}
Combining \eqref{u_i_tilde_defn}-\eqref{u_34_defn} yields
\begin{align*}
(\cH \cup \Gamma)^c &= \cH^c \cap \Gamma^c\\
&=(\cH_{-}\dcup \cH_{+}) \cap \Gamma^c\\
&=(\cH_{-}\cap \Gamma^c) \dcup (\cH_{+}\cap \Gamma^c)\\
&= U_1\dcup U_2\dcup U_3 \dcup U_4
\end{align*}
Thus, $\cH$ and $\Gamma$ partition a neighborhood of the origin into four sets. Furthermore, each $U_i$ is an open set, since
\begin{align*}
U_1 = \cH_{-}\cap \{(\lambda, \cI): \cI < \cI_{\ph}(\lambda)\}
\end{align*}
That is, $U_1$ is the intersection of two open sets, as are $U_2$, $U_3$, and $U_4$.

We conclude the proof by using Corollary \ref{sigma_i_cor} to identify which of the sets $\cH_{-}$, $\cH_{+}$ contain a periodic orbit, then use observations made above to determine the partition. If $\sigma_{\cI}(0) > 0$, then $\cH_{+}$ is the periodic regime. Hence,
\begin{align*}
(\cH\cup \cS)^c &= \cH_{-}\dcup U_3 \dcup U_4
\end{align*}
Conversely, if $\sigma_{\cI}(0)<0$, then $\cH_{-}$ is the periodic regime. In this case, 
\begin{align*}
(\cH \cup \cS)^c &= U_1 \dcup U_2 \dcup \cH_{+}
\end{align*}
In either case, the HH graph partitions a neighborhood of the origin into three open sets, as we wished to show. 
\end{proof}

Corollary \ref{hh_partition_cor} shows that our representative HH graph, Figure \ref{siph_example_subfigA} in Section \ref{intro_simultaneous_hopf_iph}, contains the correct number of regions. However, we have yet to formalize what distinguishes the two regions within the periodic regime. Our next result shows that the key to this distinction is the sign of $T_{\cI \cI}(0)$.

\begin{corollary}\label{monotonicity_cor}
Suppose that \eqref{homeo_defn_eqn} satisfies the Hopf (Definition \ref{hopf_assumption_defn}) and SIPH (Definition \ref{siph_assumption_defn}) hypotheses, and let
\begin{align*}
(V_1, V_2) &=
\begin{cases}
(U_3, U_4) \text{ if }\sigma_{\cI}(0)>0\\
(U_1, U_2) \text{ if } \sigma_{\cI}(0) < 0
\end{cases}
\end{align*}
If $T_{\cI \cI}(0)>0$, then the period input-output function is increasing in $V_1$ and decreasing in $V_2$. Conversely, if $T_{\cI \cI}(0)<0$, then the period input-output function is increasing in $V_2$ and decreasing in $V_1$.
\end{corollary}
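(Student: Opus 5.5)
The approach is to transfer the question from $P$ to $T$ through the explicit formula \eqref{period_eqn}, and then read off the sign of $T_{\cI}$ on either side of $\Gamma$ from the sign of $T_{\cI\cI}$.

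\textbf{Step 1: reduce to $T_{\cI}$.} Differentiating \eqref{period_eqn} with respect to $\cI$ gives
\begin{align*}
P_{\cI}(\lambda,\cI) = -\frac{2\pi\, T_{\cI}(\lambda,\cI)}{(1+T(\lambda,\cI))^2}.
\end{align*}
Since $T(0)=0$, we have $1+T>0$ on a small neighborhood of the origin. Hence throughout the periodic regime $\sgn P_{\cI} = -\sgn T_{\cI}$. It therefore suffices to show the following: if $T_{\cI\cI}(0)>0$, then $T_{\cI}<0$ below $\Gamma$ and $T_{\cI}>0$ above $\Gamma$; the signs reverse when $T_{\cI\cI}(0)<0$.

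\textbf{Step 2: choose the neighborhood.} Shrink to a rectangle $N = (-\delta,\delta)\times(-\epsilon,\epsilon)$ with the following properties:
\begin{itemize}
\item $T$ is smooth on $N$ (it is defined and smooth on all of a neighborhood of the origin by the proof of Theorem~\ref{hopf_thm});
\item $1+T>0$ on $N$;
\item $T_{\cI\cI}$ has the constant sign of $T_{\cI\cI}(0)$ on $N$, by continuity;
\item the conclusions of Theorem~\ref{simple_thm} and Corollaries~\ref{sigma_i_cor} and \ref{hh_partition_cor} hold on $N$;
\item $|\cI_{\ph}(\lambda)|<\epsilon$ for $|\lambda|<\delta$.
\end{itemize}
The rectangular shape is the point of this step: each vertical slice $\{\lambda\}\times(-\epsilon,\epsilon)$ is an interval, so one-variable monotonicity arguments apply along it.

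\textbf{Step 3: sign of $T_{\cI}$ along each slice.} Fix $\lambda$ and suppose $T_{\cI\cI}>0$ on $N$. Then $\cI\mapsto T_{\cI}(\lambda,\cI)$ is strictly increasing on $(-\epsilon,\epsilon)$. It vanishes at $\cI=\cI_{\ph}(\lambda)$ by Theorem~\ref{simple_thm}\ref{S2}. Therefore $T_{\cI}<0$ for $\cI<\cI_{\ph}(\lambda)$ and $T_{\cI}>0$ for $\cI>\cI_{\ph}(\lambda)$.

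\textbf{Step 4: intersect with the periodic regime.} The sets $V_1$ and $V_2$ are exactly the parts of the periodic regime (identified via Corollary~\ref{sigma_i_cor}) lying below and above $\Gamma$, respectively. By Steps 1 and 3, $P_{\cI}>0$ on $V_1$ and $P_{\cI}<0$ on $V_2$. Since the slices of $V_1$ and $V_2$ are intervals, $P$ is increasing in $\cI$ on $V_1$ and decreasing on $V_2$. The case $T_{\cI\cI}(0)<0$ is identical with all inequalities reversed.

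\textbf{Expected obstacle.} The only delicate point is Step 2: ensuring that the neighborhood is one on which $T$, and hence the period input-output function, is genuinely the period of the periodic orbit. Concretely, $T$ is meaningful only where $\tilde Z(\lambda,\cI)>0$, i.e.\ in the periodic regime. I would handle this by noting that $V_1$ and $V_2$ are by definition subsets of the periodic regime, so the sign computation is only ever applied there, while the monotonicity of $T_{\cI}$ itself is established on all of $N$.
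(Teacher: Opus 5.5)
Your proposal is correct and follows the same route as the paper. The paper also reads off the sign of $T_{\cI}$ on either side of $\Gamma$ from the sign of $T_{\cI\cI}$ and the identity $T_{\cI}(\lambda,\cI_{\ph}(\lambda))=0$, then transfers it to $P$ via $P=2\pi/(1+T)$; your formula for $P_{\cI}$ and your rectangular neighborhood just make precise its phrases ``inversely related'' and ``increasing with respect to $\cI$ near the origin.''
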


\begin{proof}
The proof is analogous for any combination of signs, so we assume without loss of generality that $\sigma_{\cI}(0) > 0$ and $T_{\cI \cI}(0)>0$. It must be shown that the period input-output function is increasing in $V_1 = U_3$ and decreasing in $V_2 = U_4$.

Let $(\lambda_1, \cI_1)\in V_1 = U_3$. It follows from the definition of $U_3$ \eqref{u_34_defn} that $\cI_1 < \cI_{\ph}(\lambda_1)$. Therefore, since $T_{\cI}(\lambda, \cI)$ is increasing with respect to $\cI$ near the origin, 
\begin{align*}
T_{\cI}(\lambda_1, \cI_1) < T_{\cI}(\lambda_1, \cI_{\ph}(\lambda_1))=0
\end{align*}
This shows that $T(\lambda, \cI)$ is decreasing with respect to $\cI$ in $V_1 = U_3$. Recall from \eqref{period_eqn} that $T(\lambda,\cI)$ is inversely related to the period input-output function, so it follows that the period input-output function is increasing with respect to $\cI$ in $V_1 = U_3$.

Now, let $(\lambda_2, \cI_2) \in V_2 = U_4$. By the definition of $U_4$ \eqref{u_34_defn}, $\cI_2 > \cI_{\ph}(\lambda_2)$, so 
\begin{align*}
T_{\cI}(\lambda_2, \cI_2) > T_{\cI}(\lambda_2, \cI_{\ph}(\lambda_2))= 0
\end{align*}
Hence, $T(\lambda, \cI)$ is increasing with respect to $\cI$ in $V_2 = U_4$, which implies that the period input-output function is decreasing with respect to $\cI$ in $V_2 = U_4$. This concludes the proof. 
\end{proof}

The final consequence we derive from Theorem \ref{simple_thm} identifies the normal form of the period input-output function near the IPH curve $\cS$. More precisely, after a smooth local change of coordinates, the period input-output function depends parabolically on $\cI$, and its variation is smallest near $\cS$.

\begin{corollary}\label{normal_form_cor}
Suppose that \eqref{homeo_defn_eqn} satisfies the Hopf (Definition \ref{hopf_assumption_defn}) and SIPH (Definition \ref{siph_assumption_defn}) hypotheses. For any $(\lambda_0, \cI_0)\in \cS$, there exists a smooth local change of coordinates such that $P(\lambda_0, \cI) = - \sgn(T_{\cI \cI}(0))\cI^2$.
\end{corollary}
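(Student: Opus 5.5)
The plan is to reduce the statement to the classical singularity-theory recognition of a simple (Morse) critical point in one variable, applied to $g(\cI) = P(\lambda_0,\cI)$ near $\cI_0$. The relevant notion is right equivalence up to an additive constant, as in Golubitsky and Stewart's treatment of input-output functions. We are free to subtract the constant $P(\lambda_0,\cI_0)$ and to apply a change of coordinates in $\cI$ fixing $\cI_0 \mapsto 0$. Since $(\lambda_0,\cI_0)\in\cS$, Theorem \ref{simple_thm}\ref{S2} gives $P_{\cI}(\lambda_0,\cI_0)=0$ and $P_{\cI\cI}(\lambda_0,\cI_0)\neq 0$. So $g$ has a nondegenerate critical point, and it remains to determine the sign of $g''(\cI_0)$ and then invoke the Morse lemma.

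First, I relate the sign of $P_{\cI\cI}$ to the sign of $T_{\cI\cI}(0)$. Differentiating \eqref{period_eqn} gives
\[
P_{\cI} = -\frac{2\pi T_{\cI}}{(1+T)^2}, \qquad P_{\cI\cI} = -\frac{2\pi T_{\cI\cI}}{(1+T)^2} + \frac{4\pi T_{\cI}^2}{(1+T)^3}.
\]
On $\cS$ we have $T_{\cI}=0$, so $P_{\cI\cI}(\lambda_0,\cI_0) = -2\pi T_{\cI\cI}(\lambda_0,\cI_0)/(1+T)^2$. Its sign is therefore $-\sgn T_{\cI\cI}(\lambda_0,\cI_0)$. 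By continuity, and by shrinking the neighborhood in Theorem \ref{simple_thm} if necessary (as in the proof of \ref{S2}), this equals $-\sgn T_{\cI\cI}(0)$.

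Second, I apply the one-variable Morse lemma via Taylor's theorem with integral remainder. Write
\[
g(\cI) - g(\cI_0) = (\cI-\cI_0)^2 h(\cI), \qquad h(\cI) = \int_0^1 (1-s)\, g''(\cI_0 + s(\cI-\cI_0))\,ds,
\]
so that $h$ is smooth and $h(\cI_0) = g''(\cI_0)/2 \neq 0$. Define $X(\cI) = (\cI-\cI_0)\sqrt{|h(\cI)|}$. Then $X$ is smooth near $\cI_0$ because $|h|$ stays bounded away from zero, and $X'(\cI_0) = \sqrt{|h(\cI_0)|}\neq 0$. By the inverse function theorem, $X$ is therefore a local diffeomorphism with $X(\cI_0)=0$. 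In this coordinate,
\[
g - g(\cI_0) = \sgn(h(\cI_0))\, X^2 = -\sgn(T_{\cI\cI}(0))\, X^2.
\]
Renaming $X$ as $\cI$ and absorbing the constant $g(\cI_0)$ into the coordinate change on the output gives the claimed normal form.

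I do not expect a serious obstacle. The only delicate points are bookkeeping. One is making explicit that ``change of coordinates'' includes translation of the output value, since $P$ itself is near $2\pi$, not near $0$. The other is ensuring that the sign transfer from $T_{\cI\cI}(\lambda_0,\cI_0)$ to $T_{\cI\cI}(0)$ holds on the whole neighborhood where $\cS$ is defined. The latter follows from the continuity statement already used in the proof of Theorem \ref{simple_thm}.
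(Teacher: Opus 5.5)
Your proposal is correct and follows essentially the same route as the paper. Both arguments obtain a nondegenerate critical point of $P(\lambda_0,\cdot)$ at $\cI_0$ from \ref{S2}, reduce it to $\pm\cI^2$ by the one-variable Morse lemma, and fix the sign using $P_{\cI\cI} = -2\pi T_{\cI\cI}/(1+T)^2$ on $\cS$ together with continuity of $T_{\cI\cI}$.

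The only differences are in presentation. The paper cites the Morse step as Theorem 4.4 of Poston and Stewart, whereas you prove it directly via the integral-remainder Taylor formula. You also make explicit two points the paper leaves implicit: subtracting the additive constant $P(\lambda_0,\cI_0)$, and computing $P_{\cI\cI}$.
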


\begin{proof}
We prove the corollary using a result from elementary catastrophe theory. Let $(\lambda_0, \cI_0)\in \cS$. It follows from Theorem \ref{simple_thm} that
\begin{align*}
P_{\cI}(\lambda_0, \cI_0) = 0 \qquad \text{and}\qquad P_{\cI \cI}(\lambda_0, \cI_0)\neq 0
\end{align*}
Therefore, after making the change of coordinates $\cI \mapsto \cI + \cI_0$, $P(\lambda_0, \cI)$ satisfies the hypotheses of Theorem 4.4 in \cite[pp. 57-58]{Poston1978}. It follows that there exists a smooth local change of coordinates such that
\begin{align*}
P(\lambda_0, \cI) = \pm \cI^2
\end{align*}
What remains to be shown is that the appropriate choice of sign in the normal form is determined by the sign of $T_{\cI \cI}(0)$. To see this is true, observe that it follows from \eqref{period_eqn} that
\begin{align*}
\sgn P_{\cI \cI}(\lambda_0, 0) = - \sgn T_{\cI \cI}(\lambda_0, \cI_0)
\end{align*}
Suppose $T_{\cI \cI}(0) > 0$. Then the equality above implies $P(\lambda_0, \cI)$ is concave at $\cI = 0$. Hence, the normal form is
\begin{align*}
P(\lambda_0, \cI) = -\cI^2 =  -\sgn (T_{\cI \cI}(0))\cI^2
\end{align*}
On the other hand, if $T_{\cI \cI}(0) < 0$, then $P(\lambda_0, \cI)$ is convex at $\cI = 0$. It follows that the normal form is
\begin{align*}
P(\lambda_0, \cI) = \cI^2 = - (-1)\cI^2 = -\sgn(T_{\cI \cI}(0))\cI^2
\end{align*}
Therefore, in either case the normal form is $-\sgn(T_{\cI \cI}(0))\cI^2$, as we wished to show.
\end{proof}


\subsection{Calculation of Taylor Coefficients of $T$}\label{taylor_coeff_sec}
Having proved our main result, we now consider how to verify whether a given model satisfies the Hopf (Definition \ref{hopf_assumption_defn}) and SIPH (Definition \ref{siph_assumption_defn}) hypotheses. Since nondegenerate Hopf bifurcation is classical, we focus on the SIPH hypotheses. These depend on the quantities
\begin{align*}
\sigma_{\lambda}(0) \qquad \sigma_{\cI}(0) \qquad T_{\cI}(0) \qquad T_{\cI \cI}(0) \qquad T_{\cI \lambda}(0)
\end{align*}
Proposition 3.3 in \cite[p. 352]{Golubitsky1985} handles the calculation of $\sigma_{\lambda}(0)$ and $\sigma_{\cI}(0)$. Therefore, it remains to determine $T_{\cI}(0)$, $T_{\cI \cI}(0)$, and $T_{\cI \lambda}(0)$.

We proceed as follows. The Hopf bifurcation literature contains formulas expressing Taylor coefficients of $p$ and $q$ in the Liapunov--Schmidt normal form in terms of the vector field $F$ in \eqref{homeo_defn_eqn}. For example, see Farr et al. \cite{Farr1989} and Golubitsky and Langford \cite{Golubitsky1981}. Thus, once the relevant Taylor coefficients of $T$ have been expressed in terms of those of $p$ and $q$, one can either appeal directly to these references or adapt their methods. To keep the discussion within the scope of this paper, we restrict attention to expressing $T_{\cI}(0)$, $T_{\cI \cI}(0)$, and $T_{\cI \lambda}(0)$ in terms of Taylor coefficients of $p$ and $q$. 

This can be done using implicit differentiation. Recall the identities
\begin{align*}
p(\tilde{Z}(\lambda, \cI), \lambda,\cI, T(\lambda,\cI))\equiv 0 \qquad \text{and}\qquad q(\tilde{Z}(\lambda, \cI), \lambda, \cI, T(\lambda, \cI))\equiv 0
\end{align*}
Although our goal is to compute $T_{\cI}(0)$, $T_{\cI \cI}(0)$ and $T_{\cI \lambda}(0)$, it follows from the identities above that these quantities depend on Taylor coefficients of $\tilde{Z}$. We therefore proceed in two steps. First, we differentiate $p\equiv 0$ to solve for the Taylor coefficients of $\tilde{Z}$. This is possible because $p_{\tau}(0) = 0$. Second, we differentiate $q\equiv 0$, substitute the formulas obtained in the first step, and solve for the Taylor coefficients of $T$. Since we need second-order coefficients, we carry out this procedure twice. 

To begin, we calculate linear terms in the Taylor series of $\tilde{Z}$.

\begin{proposition}\label{linear_z_prop}
Suppose that \eqref{homeo_defn_eqn} satisfies the Hopf (Definition \ref{hopf_assumption_defn}) and SIPH (Definition \ref{siph_assumption_defn}) hypotheses. Then
\begin{align*}
\tilde{Z}_{\cI}(0) = - \frac{p_{\cI}}{p_{z}}(0) \qquad \text{and} \qquad \tilde{Z}_{\lambda}(0) = - \frac{p_{\lambda}}{p_{z}}(0)
\end{align*}
\end{proposition}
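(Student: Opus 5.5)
We differentiate the identity
\begin{align*}
p(\tilde{Z}(\lambda, \cI), \lambda, \cI, T(\lambda, \cI)) \equiv 0
\end{align*}
from the proof of Theorem \ref{hopf_thm} and evaluate at the origin. Here $T(\lambda,\cI) = \tilde{T}(\tilde{Z}(\lambda,\cI),\lambda,\cI)$. Both $\tilde{Z}$ and $T$ are smooth near $(\lambda,\cI)=(0,0)$ by the implicit function theorem arguments given there, so the chain rule applies.

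Differentiating with respect to $\cI$ gives
\begin{align*}
p_z \tilde{Z}_{\cI} + p_{\cI} + p_{\tau} T_{\cI} \equiv 0 .
\end{align*}
At the origin we have $\tilde{Z}(0)=0$ and $T(0)=\tilde{T}(0)=0$, so every partial derivative of $p$ is evaluated at $(z,\lambda,\cI,\tau)=(0,0,0,0)$. By \eqref{p_q_eqns}, $p_\tau(0)=0$, and the term involving $T_{\cI}(0)$ drops out regardless of its value. This decoupling is the key point: we never need to know $T_{\cI}(0)$ beforehand. We are left with $p_z(0)\tilde{Z}_{\cI}(0) + p_{\cI}(0) = 0$. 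The cubic condition \ref{H3} gives $p_z(0)<0$, in particular $p_z(0)\neq 0$, so we can divide to get $\tilde{Z}_{\cI}(0) = -\frac{p_{\cI}}{p_z}(0)$.

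The same computation with $\lambda$ in place of $\cI$ gives $p_z(0)\tilde{Z}_\lambda(0) + p_\lambda(0) + p_\tau(0)T_\lambda(0) = 0$, and therefore $\tilde{Z}_\lambda(0) = -\frac{p_\lambda}{p_z}(0)$.

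No step here is a real obstacle. The only care needed is in justifying the evaluation point. Specifically, we must confirm that $(\tilde{Z}(0),0,0,T(0))$ is the origin, so that \eqref{p_q_eqns} and \ref{H3} apply. We must also note that the composite identity holds on a full neighborhood, not just at a point, so that differentiating it is legitimate. The SIPH hypotheses are not used; only the Hopf hypotheses are.
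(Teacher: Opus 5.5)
Your proposal is correct and follows essentially the same route as the paper: you differentiate the identity $p(\tilde{Z}(\lambda,\cI),\lambda,\cI,T(\lambda,\cI))\equiv 0$, use $p_{\tau}(0)=0$ to eliminate the $T_{\cI}$ (respectively $T_{\lambda}$) term, and divide by $p_{z}(0)\neq 0$. Your additional checks are correct: that the evaluation point is the origin, and that only the Hopf hypotheses are needed.
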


\begin{proof}
The two calculations are analogous, so it suffices to compute $\tilde{Z}_{\cI}(0)$. Differentiate $p\equiv 0$ with respect to $\cI$ and evaluate at $(\lambda, \cI) = (0,0)$ to obtain
\begin{align*}
(p_{z}\tilde{Z}_{\cI} + p_{\cI} + p_{\tau}T_{\cI})(0) = 0
\end{align*}
Since $p_{\tau}(0) = 0$, solving for $\tilde{Z}_{\cI}(0)$ yields the formula stated in the proposition.
\end{proof}

Using Proposition \ref{linear_z_prop}, we now compute the linear terms in the Taylor series of $T$.

\begin{proposition}\label{linear_t_prop}
Suppose that \eqref{homeo_defn_eqn} satisfies the Hopf (Definition \ref{hopf_assumption_defn}) and SIPH (Definition \ref{siph_assumption_defn}) hypotheses. Then
\begin{align*}
T_{\cI}(0) = \frac{p_{z}q_{\cI} - q_{z}p_{\cI}}{p_{z}}(0) \qquad \text{and} \qquad T_{\lambda}(0) = \frac{p_{z}q_{\lambda}-q_{z}p_{\lambda}}{p_{z}}(0)
\end{align*}
\end{proposition}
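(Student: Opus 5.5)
The plan mirrors the proof of Proposition \ref{linear_z_prop}: differentiate the identity
\begin{align*}
q(\tilde{Z}(\lambda, \cI), \lambda, \cI, T(\lambda, \cI)) \equiv 0
\end{align*}
once with respect to $\cI$ (respectively $\lambda$) and evaluate at the origin. Then substitute the already computed first-order coefficients of $\tilde{Z}$ and use the normalization $q_{\tau}(0) = -1$ from \eqref{p_q_eqns} to solve for $T_{\cI}(0)$ (respectively $T_{\lambda}(0)$). The two computations are identical after swapping $\cI$ and $\lambda$, so I would carry out only the $\cI$ case and note that the $\lambda$ case is analogous.

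First I would justify differentiating the identity. In the proof of Theorem \ref{hopf_thm}, $\tilde{Z}$ and $T(\lambda,\cI) = \tilde{T}(\tilde{Z}(\lambda,\cI),\lambda,\cI)$ are smooth functions given by the implicit function theorem, and the composite $q(\tilde{Z}, \lambda, \cI, T)$ vanishes identically near the origin. The chain rule then gives, at $(\lambda,\cI) = (0,0)$ (where $\tilde{Z}(0)=0$ and $T(0)=0$),
\begin{align*}
(q_{z}\tilde{Z}_{\cI} + q_{\cI} + q_{\tau}T_{\cI})(0) = 0
\end{align*}
Since $q_{\tau}(0) = -1$, this becomes $T_{\cI}(0) = (q_{z}\tilde{Z}_{\cI} + q_{\cI})(0)$.

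Next I would insert $\tilde{Z}_{\cI}(0) = -\frac{p_{\cI}}{p_{z}}(0)$ from Proposition \ref{linear_z_prop}. This is legitimate because $p_{z}(0) \neq 0$ by the cubic condition \ref{H3}. Putting everything over the common denominator $p_{z}(0)$ gives
\begin{align*}
T_{\cI}(0) = \left(q_{\cI} - \frac{q_{z}p_{\cI}}{p_{z}}\right)(0) = \frac{p_{z}q_{\cI} - q_{z}p_{\cI}}{p_{z}}(0)
\end{align*}
The same steps with $\lambda$ in place of $\cI$ yield the formula for $T_{\lambda}(0)$.

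There is no real obstacle here. The only point requiring care is the sign bookkeeping with $q_{\tau}(0) = -1$: a sign error would flip the whole expression. I would check the result against the example in Figure \ref{siph_example_fig}. There $q_{\tau} = -1$, $q_{z} = 1$, $q_{\cI}(0) = -1$, $p_{\cI}(0) = 1$, and $p_{z} = -1$, so the formula gives $T_{\cI}(0) = (1 - 1)/(-1) = 0$.

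Solving that example directly confirms this. From $p = 0$ one gets $z = \tfrac{1}{2}\cI^{2} + \lambda\cI + \cI + \lambda$, and substituting into $q = 0$ gives $\tau = -\tfrac{1}{4}\cI^{2} + \lambda\cI$. Hence $T_{\cI}(0) = 0$, matching the formula and consistent with the origin lying on $\cS$ in that figure.
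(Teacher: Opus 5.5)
Your proposal is correct and follows essentially the same route as the paper: you differentiate $q(\tilde{Z},\lambda,\cI,T)\equiv 0$ in $\cI$, use $q_{\tau}(0)=-1$, substitute $\tilde{Z}_{\cI}(0)=-\frac{p_{\cI}}{p_{z}}(0)$ from Proposition \ref{linear_z_prop}, and treat the $\lambda$ case as analogous. Your check against the example of Figure \ref{siph_example_fig}, where $T=-\tfrac{1}{4}\cI^{2}+\lambda\cI$, is a useful addition that the paper does not include.
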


\begin{proof}
We compute $T_{\cI}(0)$. By differentiating $q\equiv 0$ with respect to $\cI$ and evaluating at $(\lambda, \cI) = (0,0)$, we obtain
\begin{align*}
(q_{z}\tilde{Z}_{\cI} + q_{\cI}  + q_{\tau}T_{\cI})(0) = 0
\end{align*}
Recall from Theorem \ref{reduced_eqn_thm} that $q_{\tau}(0) = -1$, so it follows that
\begin{align*}
T_{\cI}(0) = (q_{z}\tilde{Z}_{\cI} + q_{\cI})(0) = \frac{p_{z} q_{\cI} - q_{z} p_{\cI}}{p_{z}}(0)
\end{align*}
This gives the formula for $T_{\cI}(0)$. The calculation of $T_{\lambda}(0)$ is analogous.
\end{proof}

We now turn to the quadratic terms in the Taylor series of $\tilde{Z}$. Since our goal is to compute $T_{\cI \cI}(0)$ and $T_{\cI \lambda}(0)$, we only need two of these coefficients. 

\begin{proposition}\label{quadratic_z_prop}
Suppose that \eqref{homeo_defn_eqn} satisfies the Hopf (Definition \ref{hopf_assumption_defn}) and SIPH (Definition \ref{siph_assumption_defn}) hypotheses. Then
\begin{align*}
\tilde{Z}_{\cI \cI}(0) &= \frac{2p_{z} p_{z\cI} p_{\cI} - p_{z z} p_{\cI}^2 - p_{z}^2 p_{\cI \cI}}{p_{z}^3}(0)\\
\tilde{Z}_{\cI \lambda}(0) &= \frac{1}{p_{z}^3}\lp p_{z} p_{z\lambda} p_{\cI} + p_{z \tau} p_{\cI} p_{z} q_{\lambda} + p_{z} p_{z\cI} p_{\lambda} + p_{z} p_{\tau \cI} p_{\lambda} q_{z} - p_{zz} p_{\lambda} p_{\cI} - p_{z\tau} p_{\cI} p_{\lambda} q_{z}\\
&\hspace{80mm} - p_{z}^2 p_{\cI \lambda} - p_{\tau \cI} p_{z}^2 q_{\lambda}\rp(0)
\end{align*}
\end{proposition}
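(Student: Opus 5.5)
We differentiate the identity
\[
p(\tilde{Z}(\lambda,\cI),\lambda,\cI,T(\lambda,\cI))\equiv 0
\]
twice, once with respect to $\cI\cI$ and once with respect to $\cI\lambda$, and evaluate at the origin. The simplifications are $p(0)=0$ and $p_\tau(0)=0$ from \eqref{p_q_eqns}. The term $p_\tau T_{\cI\cI}$ (respectively $p_\tau T_{\cI\lambda}$) then drops out, so the resulting identity is linear in $\tilde{Z}_{\cI\cI}(0)$ (respectively $\tilde{Z}_{\cI\lambda}(0)$) with coefficient $p_z(0)\neq 0$ by \ref{H3}. After that we substitute the first-order formulas: $\tilde{Z}_{\cI}(0)=-p_{\cI}/p_z$ and $\tilde{Z}_{\lambda}(0)=-p_\lambda/p_z$ from Proposition~\ref{linear_z_prop}, and $T_{\cI}(0)$, $T_\lambda(0)$ from Proposition~\ref{linear_t_prop}.

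For $\tilde{Z}_{\cI\cI}$, the second derivative of the composition is
\[
p_{zz}\tilde{Z}_{\cI}^2+2p_{z\cI}\tilde{Z}_{\cI}+2p_{z\tau}\tilde{Z}_{\cI}T_{\cI}+p_{\cI\cI}+2p_{\cI\tau}T_{\cI}+p_{\tau\tau}T_{\cI}^2+p_z\tilde{Z}_{\cI\cI}+p_\tau T_{\cI\cI}.
\]
In the stated formula, every term containing $T_{\cI}$ is absent. This is because the proposition assumes the SIPH hypotheses, and \ref{P2} gives $T_{\cI}(0)=0$. So the $p_{z\tau}$, $p_{\cI\tau}$ and $p_{\tau\tau}$ terms vanish. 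Solving gives
\[
\tilde{Z}_{\cI\cI}=-\bigl(p_{zz}p_{\cI}^2/p_z^2-2p_{z\cI}p_{\cI}/p_z+p_{\cI\cI}\bigr)/p_z,
\]
and putting this over the common denominator $p_z^3$ yields exactly the stated expression.

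For the mixed derivative, the expansion is
\begin{align*}
&p_{zz}\tilde{Z}_{\cI}\tilde{Z}_\lambda+p_{z\lambda}\tilde{Z}_{\cI}+p_{z\tau}\tilde{Z}_{\cI}T_\lambda+p_{z\cI}\tilde{Z}_\lambda+p_{\cI\lambda}+p_{\cI\tau}T_\lambda\\
&\quad+\bigl(p_{z\tau}\tilde{Z}_\lambda+p_{\lambda\tau}+p_{\tau\tau}T_\lambda\bigr)T_{\cI}+p_z\tilde{Z}_{\cI\lambda}+p_\tau T_{\cI\lambda}.
\end{align*}
Again $T_{\cI}(0)=0$ kills the bracket and $p_\tau(0)=0$ kills the last term. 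The surviving $T_\lambda$ terms are then rewritten using Proposition~\ref{linear_t_prop}, namely $T_\lambda=(p_zq_\lambda-q_zp_\lambda)/p_z$. That substitution produces the $q_\lambda$ and $q_z$ contributions in the stated formula; for instance, $p_{\cI\tau}T_\lambda$ produces $-p_{\tau\cI}p_z^2q_\lambda$ and $+p_zp_{\tau\cI}p_\lambda q_z$ after multiplying through by $-p_z^2$. Solving for $\tilde{Z}_{\cI\lambda}$ and clearing denominators to $p_z^3$ should reproduce the eight terms listed.

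The main obstacle is the bookkeeping in the mixed derivative. We need to track signs consistently when multiplying through by $-p_z^2$ and combine terms over $p_z^3$. We also need to keep the terms that survive only because $T_\lambda(0)$ need not vanish, while discarding those multiplied by $T_{\cI}(0)$. We would check the final expression term by term against the displayed formula, matching $p_{z\tau}p_{\cI}(p_zq_\lambda-q_zp_\lambda)$ in particular.
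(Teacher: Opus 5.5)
Your proposal is correct and matches the paper's proof. Both differentiate $p\equiv 0$ twice, discard $p_\tau T_{\cI\cI}$ and $p_\tau T_{\cI\lambda}$ using $p_\tau(0)=0$ and every $T_{\cI}$-term using \ref{P2}, then substitute Propositions~\ref{linear_z_prop} and~\ref{linear_t_prop} and solve. Your expansions are slightly more explicit than the paper's, whose displayed identities silently omit the terms multiplied by $T_{\cI}$. Carrying out your substitution reproduces all eight terms of the mixed formula with the correct signs.
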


\begin{proof}
We begin by computing $\tilde{Z}_{\cI \cI}(0)$. Differentiating $p\equiv 0$ with respect to $\cI$ yields
\begin{align}
0 &\equiv p_{z}(\tilde{Z}(\lambda, \cI), \lambda, \cI, T(\lambda,\cI)) \tilde{Z}_{\cI}(\lambda,\cI) + p_{\cI}(\tilde{Z}(\lambda, \cI), \lambda, \cI, T(\lambda,\cI))\nonumber\\
&\hspace{70mm}+ p_{\tau}(\tilde{Z}(\lambda,\cI), \lambda, \cI, T(\lambda, \cI)) T_{\cI}(\lambda, \cI)
\label{p_i_eqn}
\end{align}
Next, differentiate \eqref{p_i_eqn} with respect to $\cI$ and evaluate at $(\lambda, \cI) = (0,0)$ to obtain
\begin{align*}
0 = (p_{zz} \tilde{Z}_{\cI}^2 + 2 p_{z\cI} \tilde{Z}_{\cI} + p_{z} \tilde{Z}_{\cI \cI} + p_{\cI \cI})(0)
\end{align*}
After using Proposition \ref{linear_z_prop} to express $\tilde{Z}_{\cI}(0)$ in terms of Taylor coefficients of $p$ and $q$, solving this equation for $\tilde{Z}_{\cI \cI}(0)$ leads to the asserted formula.

We are left to compute $\tilde{Z}_{\cI \lambda}(0)$. The previous calculation simplified because $T_{\cI}(0) = 0$. Since $T_{\lambda}(0)$ is generally nonzero, the mixed derivative calculation involves more terms. To that end, differentiating \eqref{p_i_eqn} with respect to $\lambda$ and evaluating at $(\lambda,\cI) = (0,0)$ yields
\begin{align*}
0 &= (p_{zz} \tilde{Z}_{\lambda} \tilde{Z}_{\cI} + p_{z\lambda} \tilde{Z}_{\cI} + p_{z\tau} T_{\lambda} \tilde{Z}_{\cI} + p_{z} \tilde{Z}_{\cI \lambda} + p_{\cI z}\tilde{Z}_{\lambda} + p_{\cI \lambda} + p_{\cI \tau} T_{\lambda})(0)
\end{align*}
Substituting the formulas from Propositions \ref{linear_z_prop} and \ref{linear_t_prop} and solving for $\tilde{Z}_{\cI \lambda}(0)$ gives the desired result. 
\end{proof}

We conclude this section by expressing $T_{\cI \cI}(0)$ and $T_{\cI \lambda}(0)$ in terms of Taylor coefficients of $p$ and $q$. 

\begin{proposition}\label{quadratic_t_prop}
Suppose \eqref{homeo_defn_eqn} satisfies the Hopf (Definition \ref{hopf_assumption_defn}) and SIPH (Definition \ref{siph_assumption_defn}) hypotheses. Then
\begin{align*}
T_{\cI \cI}(0) &= \frac{1}{p_{z}^3} \lp 2 p_{z} q_{z} p_{z\cI} p_{\cI} + p_{z} q_{zz} p_{\cI}^2 + p_{z}^3 q_{\cI \cI} -q_{z} p_{zz} p_{\cI}^2 - p_{z}^2 q_{z} p_{\cI \cI} - 2 p_{z}^2 q_{z\cI} p_{\cI}\rp (0) \\
T_{\cI \lambda}(0) &= \frac{1}{p_{z}^3} \lp q_{z} p_{z} p_{z\lambda} p_{\cI} + q_{z} p_{z} p_{z\tau} q_{\lambda} p_{\cI} + q_{z} p_{z} p_{\cI z} p_{\lambda} + q_{z}^2 p_{z} p_{\cI \tau} p_{\lambda} + p_{z} q_{zz} p_{\lambda} p_{\cI} \\
&\hspace{10mm} + p_{z} q_{z\tau} p_{\cI} q_{z} p_{\lambda} + p_{z}^3 q_{\cI \lambda} + p_{z}^3 q_{\cI \tau} q_{\lambda} - q_{z} p_{zz} p_{\lambda} p_{\cI} - q_{z}^2 p_{z\tau} p_{\cI} p_{\lambda} - q_{z} p_{z}^2 p_{\cI \lambda}\\
&\hspace{10mm} - q_{z} p_{z}^2 p_{\cI \tau} q_{\lambda} - p_{z}^2 q_{z\lambda} p_{\cI} - p_{z}^2 q_{z\tau} p_{\cI} q_{\lambda} - p_{z}^2 q_{\cI z} p_{\lambda} - p_{z}^2 q_{\cI \tau} q_{z} p_{\lambda}\rp (0)
\end{align*}
\end{proposition}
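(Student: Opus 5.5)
We obtain $T_{\cI\cI}(0)$ and $T_{\cI\lambda}(0)$ by the same two-step implicit differentiation used for Propositions \ref{linear_t_prop} and \ref{quadratic_z_prop}, now applied to the identity $q(\tilde{Z}(\lambda,\cI),\lambda,\cI,T(\lambda,\cI))\equiv 0$. First we differentiate it once in $\cI$, giving the analogue of \eqref{p_i_eqn}:
\begin{align*}
0\equiv q_z\tilde{Z}_{\cI}+q_{\cI}+q_{\tau}T_{\cI},
\end{align*}
with all $q$-derivatives evaluated along $(\tilde{Z},\lambda,\cI,T)$. We differentiate this identity a second time, in $\cI$ or in $\lambda$, and evaluate at the origin. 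Because $q_\tau(0)=-1$, each resulting equation can be solved linearly for the desired second derivative of $T$.

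For $T_{\cI\cI}(0)$ we use $T_{\cI}(0)=0$ from \ref{P2}, which kills every term containing $T_{\cI}$. This leaves
\begin{align*}
T_{\cI\cI}(0)=\bigl(q_{zz}\tilde{Z}_{\cI}^2+2q_{z\cI}\tilde{Z}_{\cI}+q_z\tilde{Z}_{\cI\cI}+q_{\cI\cI}\bigr)(0).
\end{align*}
We then substitute $\tilde{Z}_{\cI}(0)=-p_{\cI}/p_z$ from Proposition \ref{linear_z_prop} and $\tilde{Z}_{\cI\cI}(0)$ from Proposition \ref{quadratic_z_prop}, and put everything over the common denominator $p_z^3$. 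As a check, $q_{zz}p_{\cI}^2/p_z^2$ becomes $p_zq_{zz}p_{\cI}^2/p_z^3$, and $-2q_{z\cI}p_{\cI}/p_z$ becomes $-2p_z^2q_{z\cI}p_{\cI}/p_z^3$. The remaining terms come from $q_z$ times the three terms of $\tilde{Z}_{\cI\cI}(0)$, and together these match the stated formula.

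For $T_{\cI\lambda}(0)$ we differentiate the first identity in $\lambda$, keeping $T_\lambda(0)\neq 0$ in general. This gives
\begin{align*}
T_{\cI\lambda}(0)=\bigl(q_{zz}\tilde{Z}_\lambda\tilde{Z}_{\cI}+q_{z\lambda}\tilde{Z}_{\cI}+q_{z\tau}T_\lambda\tilde{Z}_{\cI}+q_z\tilde{Z}_{\cI\lambda}+q_{\cI z}\tilde{Z}_\lambda+q_{\cI\lambda}+q_{\cI\tau}T_\lambda\bigr)(0).
\end{align*}
The term $q_{\tau\tau}T_\lambda T_{\cI}$ and the term $q_{\tau z}\tilde{Z}_\lambda T_{\cI}$ both vanish since $T_{\cI}(0)=0$. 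The term $q_{\tau}T_{\cI\lambda}$ is the one moved to the left, with $q_\tau=-1$. We then substitute $\tilde{Z}_\lambda(0)$ and $\tilde{Z}_{\cI}(0)$ from Proposition \ref{linear_z_prop}, $T_\lambda(0)=(p_zq_\lambda-q_zp_\lambda)/p_z$ from Proposition \ref{linear_t_prop}, and $\tilde{Z}_{\cI\lambda}(0)$ from Proposition \ref{quadratic_z_prop}, and expand over $p_z^3$.

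The main obstacle is this last bookkeeping step. Each product with $T_\lambda$ splits into a $q_\lambda$ piece and a $q_zp_\lambda$ piece, and $q_z\tilde{Z}_{\cI\lambda}$ contributes eight terms. We would track signs term by term and match them against the sixteen terms listed in the statement. One point to watch is that a term like $q_zp_zp_{z\tau}q_\lambda p_{\cI}$ arises from $q_z$ multiplying $\tilde{Z}_{\cI\lambda}$; the paper's use of $p_\tau(0)=0$ has already discarded other candidates. As a sanity check, we would test the final formulas on the explicit $p,q$ of Figure \ref{siph_example_fig}, where $T$ can be computed in closed form.
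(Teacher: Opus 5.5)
Your proposal is correct and follows the paper's proof essentially verbatim: differentiate the $q$-identity once in $\cI$, then again in $\cI$ or $\lambda$, use $T_{\cI}(0)=0$ and $q_\tau(0)=-1$, and substitute the coefficients from Propositions \ref{linear_z_prop}, \ref{linear_t_prop}, and \ref{quadratic_z_prop}. The bookkeeping you defer does yield exactly the sixteen stated terms ($1+1+2+8+1+1+2$ from the seven surviving summands), and the term $q_{\tau\lambda}T_{\cI}$ also vanishes, alongside the two $T_{\cI}$-terms you list.
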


\begin{proof}
Both calculations require us to differentiate $q\equiv 0$ with respect to $\cI$, which yields
\begin{align}
0 &\equiv q_{z}(\tilde{Z}(\lambda, \cI), \lambda,\cI, T(\lambda, \cI))\tilde{Z}_{\cI}(\lambda,\cI) + q_{\cI}(\tilde{Z}(\lambda,\cI), \lambda, \cI, T(\lambda,\cI)) \nonumber \\
&\hspace{70mm}+ q_{\tau}(\tilde{Z}(\lambda,\cI), \lambda, \cI, T(\lambda, \cI))T_{\cI}(\lambda, \cI)
\label{t_i_eqn}
\end{align}
Now, we first calculate $T_{\cI \cI}(0)$ by differentiating \eqref{t_i_eqn} with respect to $\cI$ and evaluating at $(\lambda, \cI) = (0,0)$. The result is
\begin{align*}
0 &= (q_{zz}\tilde{Z}_{\cI}^2 + 2 q_{z\cI} \tilde{Z}_{\cI} + q_{z} \tilde{Z}_{\cI \cI} + q_{\cI \cI} + q_{\tau} T_{\cI \cI})(0)
\end{align*}
Using $q_{\tau}(0) = -1$ along with the formulas for $\tilde{Z}_{\cI}(0)$ and $\tilde{Z}_{\cI \cI}(0)$ from Propositions \ref{linear_z_prop} and \ref{quadratic_z_prop}, respectively, and solving for $T_{\cI \cI}(0)$ gives the desired formula. 

To complete the proof, we compute $T_{\cI \lambda}(0)$. Since $T_{\lambda}(0)$ is generically nonzero, this calculation is more involved, but the method is the same. Namely, differentiate \eqref{t_i_eqn} with respect to $\lambda$ and evaluate at $(\lambda, \cI) = (0,0)$ to obtain
\begin{align*}
0 &= (q_{zz} \tilde{Z}_{\lambda} \tilde{Z}_{\cI} + q_{z\lambda} \tilde{Z}_{\cI} + q_{z\tau} T_{\lambda} \tilde{Z}_{\cI} + q_{z} \tilde{Z}_{\cI \lambda} + q_{\cI z} \tilde{Z}_{\lambda} + q_{\cI \lambda} + q_{\cI \tau} T_{\lambda} + q_{\tau}T_{\cI \lambda})(0)
\end{align*}
Substituting the previously derived formulas and solving for $T_{\cI \lambda}(0)$ gives the asserted formula.
\end{proof}

\section{Conclusion} \label{conclusion_sec}

In this paper, we studied how certain biological oscillators maintain a tightly regulated period despite variation in a parameter. Motivated by the notion of infinitesimal steady-state homeostasis introduced by Golubitsky and Stewart \cite{Golubitsky2017}, we formulated an infinitesimal notion of homeostasis for periodic solutions. Since periodic solutions often arise through Hopf bifurcation, we worked in that setting. Our main result shows that when nondegenerate Hopf bifurcation and SIPH occur simultaneously at a point in parameter space, the local behavior is organized by an HH graph. 

The HH graph provides a mathematical framework for understanding both features of interest: the existence of an oscillation and the robustness of the period. Namely, the oscillation is the result of a small-amplitude periodic orbit born at a Hopf bifurcation, while the robustness of the period is explained by IPH. Our simulation of the Kim--Forger SNF model identified the existence of three IPH curves, which supports these claims. 

However, there are still several important open questions. First, the theory assumes nondegeneracy on both the Hopf and IPH sides, whereas biological models of interest may exhibit degeneracies. On the Hopf side, degeneracy occurs when the simple eigenvalue condition \ref{H1} fails, the Hopf crossing condition \ref{H2} fails, or the cubic coefficient satisfies $p_z(0) = 0$. For example, Pei et al. \cite{Pei2024} show that the first Liapunov coefficient vanishes in the Kim--Forger SNF model at the point $\cG$ in Figure \ref{KF_fig1}, corresponding to $p_{z}(0) = 0$ in our formulation. On the IPH side, degeneracy occurs when one of the SIPH hypotheses (Definition \ref{siph_assumption_defn}) fails. Our simulations suggest that the Kim--Forger SNF model also provides an example of this, since Figure \ref{KF_fig2} exhibits the geometry associated with a chair singularity. Namely, two SIPH points approach, coalesce, and then disappear. Developing a theory for this transition would extend the current framework beyond nondegenerate IPH.

Second, certain properties of infinitesimal steady-state homeostasis are known to be closely connected to network architecture, where network architecture stems from biochemical reaction systems \cite{Wang2021, Duncan2024, Reed2017}. It is unclear whether a connection between biochemical network structure and period homeostasis exists. Investigating this possibility could help clarify aspects of period homeostasis.


\section*{Acknowledgements}
We thank Fernando Antoneli,  Adriana Dawes, Punit Gandhi, Jiaxin Jin, Liam O'Brien, and Yangyang Wang for insightful discussions and helpful feedback. 



\end{document}